\newcommand{\labbel}{\label}
\newtheorem{theorem}{Theorem}[section]
\newtheorem{lemma}[theorem]{Lemma}
\newtheorem{proposition}[theorem]{Proposition} 
\newtheorem{corollary}[theorem]{Corollary}
\newtheorem*{theorem*}{Theorem}
\newtheorem*{corollary*}{Corollary}
\theoremstyle{definition}
\theoremstyle{remark}
\newtheorem{remark}[theorem]{Remark}
\newcommand{\brfrt}{\hspace{0 pt}}
\DeclareMathOperator{\cf}{cf}
\DeclareMathOperator{\CAP}{CAP}
\begin{document}

\title[Ultrafilter convergence in ordered  topological spaces]{Ultrafilter convergence in ordered \\ topological spaces}

\author{Paolo Lipparini} 
\address{Dipartimento di Matematica\\ Viale degli Ordini
 Scientifici\\II Universit\`a di Roma (Tor Vergata)\\I-00133 ROME ITALY}
\urladdr{http://www.mat.uniroma2.it/\textasciitilde lipparin}
\email{lipparin@axp.mat.uniroma2.it}

\keywords{Linearly ordered,  generalized ordered topological space; 
ultrafilter convergence, compactness, pseudocompactness; 
(pseudo-)gap; converging $\nu$-sequence;
(weak) $[\nu, \nu]$-compactness; 
(weak) initial $\lambda$-compactness; complete accumulation point; $\lambda$-boundedness; decomposable, descendingly complete, regular ultrafilter}

\subjclass[2010]{54F05, 54A20, 54B10, 54D20; 06A05, 54A35, 03E75}

\begin{abstract}
We characterize ultrafilter convergence and ultrafilter 
compactness in linearly ordered and generalized ordered topological spaces.
In such spaces, and for every ultrafilter $D$, 
the notions of $D$-compactness and of $D$-pseudocompactness
are equivalent.
Any product of initially $\lambda$-compact 
generalized ordered topological spaces is still  initially $\lambda$-compact.
On the other hand, preservation under products
of  certain  compactness properties is independent
from  the usual axioms for set theory.
\end{abstract} 
 
\maketitle

{\it 
\noindent
\hskip 70 pt Once upon a time people red papers and sometimes quoted them.

\noindent
\hskip 70 pt  Now people quote papers and sometimes read them.
}

\bigskip

\section{Introduction} \labbel{intro}

It is well-known that many covering properties,
which in general are distinct, turn out to be equivalent
for \emph{linearly ordered topological spaces} (henceforth, LOTS, for short).
For example, a LOTS is pseudocompact if and only if it is countably compact, if and only if
it is sequentially compact. See 
Gulden, Fleischman and Weston  \cite{GFW} and
Purisch \cite{purisch} for these and more general results,
also involving uncountable cardinals and dealing with product theorems, as well.
Most of the above results generalize 
to \emph{GO spaces}, short for \emph{generalized ordered topological spaces};  
 see \cite{purisch} and  \cite{goic}, or below.

Here we show that, for every ultrafilter $D$, $D$-compactness and 
$D$-\brfrt pseudocompactness are equivalent in GO spaces (Remark \ref{iff}, or Theorem \ref{thm}(1) $\Leftrightarrow $  (5)).
The particular case when $D$ is  an ultrafilter over $ \omega$
is due to  Garc{\'{\i}}a-Ferreira and Sanchis \cite{GFS},
whose paper contains many other related theorems.
More generally, we characterize 
$D$-converging sequences in GO spaces (Proposition \ref{single}(7)-(9)).
We also show that the $D$-compactness
of some GO space $X$ depends exclusively 
on
the ``decomposability spectrum''   of $D$
and
on the cardinal types of (pseudo-)gaps in $X$,
equivalently, on the existence of non converging
strictly monotone sequences of certain cardinal order types (Theorem \ref{thm}).

Some of the 
results mentioned in the first
paragraph can be obtained as corollaries of Theorem \ref{thm}.
For example, we get a proof that any product of initially $\lambda$-compact 
GO spaces is still  initially $\lambda$-compact (Corollary \ref{initial}).
The result is quite surprising, since noncompact covering properties
are usually not preserved by taking products. Just to state a significant example,
 the square of a Lindel\"of LOTS is not necessarily Lindel\"of.
The Sorgenfrey line is an example which is a GO space.
To get an example which is a LOTS, consider 
$\mathbb R \times \omega $ 
with the lexicographic order.
Also, it is well-known that the product of two
countably compact topological spaces is not necessarily countably compact,
and many similar counterexamples are known for initial
$\kappa$-compactness, when $\kappa$ is regular.
See, e.~g.,  Stephenson \cite{ste84}, Vaughan \cite{vau84},  Nyikos and Vaughan \cite{NV} and more references there.  
Hence, for $\kappa$ regular,  preservation of
initial
$\kappa$-compactness under products  is really a special property of GO spaces.
In passing, let us mention that, on the other hand,
when $\kappa$ is a singular strong limit cardinal,
every product of  initially
$\kappa$-compact topological spaces is 
still initially
$\kappa$-compact, by the celebrated  
 Stephenson and Vaughan Theorem 1 in \cite{sv74}. 

In the particular case of  LOTS, preservation of
initial
$\kappa$-compactness under products had originally been proved 
in \cite{GFW}, for every infinite cardinal $\kappa$. Not aware of 
Gulden, Fleischman and Weston's result, 
we originally have found a proof which holds for GO spaces, too,
and which uses ultrafilter convergence \cite{goic}. Later we realized that
 the theorem 
can also be proved by adapting some  arguments from
\cite{GFW} (see the second proof of Corollary \ref{initial} here), hence the use of ultrafilters becomes unnecessary.
However, luckily for scholars  fond of ultrafilters,
there are indeed compactness properties whose preservation
under products (in GO spaces) involves ultrafilters 
 in an essential way. 
For example, this is the case when considering simultaneously
countable compactness and  
$[ \lambda , \lambda ]$-\brfrt compactness.
Considering preservation
of such properties leads to statements independent from
the usual axioms of set theory, statements which involve the existence 
of ultrafilters with an ``unusual'' descending completeness spectrum
(equivalently, decomposability spectrum).
See Corollary \ref{gocorbb} for the general statement, and 
Corollary \ref{gocorbbb} for an explicit example.

Now a few words about the fate of \cite{goic}. 
Since all the results 
proved there are subsumed by  the present paper,
we are not going to submit \cite{goic} elsewhere.
However, we shall keep it available in archived form,
since it might be useful for those looking for 
a  direct proof of Corollaries \ref{initial} and  \ref{cor} here,
which essentially were  the main theorems of \cite{goic}.

\section{Preliminaries} \labbel{prel} 

We now recall the relevant definitions.
A \emph{LOTS} is a linearly ordered set endowed with the open interval topology.
A \emph{GO space} is a linearly ordered set 
with a $T_2$ topology
 having a base of order-convex sets. GO spaces are exactly 
spaces which can be obtained as subspaces of 
LOTS; the notion is more general, since if $X$ is a LOTS, and $Y \subseteq X$,
then  the subset topology on $Y$ induced by the topology on $X$
might be finer than the order topology  on $Y$ relative to the restriction
of the order on $X$. See, e.~g., \cite{BL} for more informations and references
about LOTS and GO spaces.

 If $D$ is an ultrafilter over some set $I$, then a topological space $X$ is said to be \emph{$D$-compact}
  if every $I$-indexed sequence $(x_i) _{i \in I} $ of elements of $X$ 
 \emph{$D$-converges} to some $x \in X$, that is,
 $\{ i \in I \mid x_i \in U\} \in D$,
for every open neighborhood $U$ of $x$.
The notion of ultrafilter convergence has proved
particularly useful in the study of compactness properties
of topological spaces, in particular with respect to preservation under products.
Classical papers on the subject are Bernstein \cite{ber70}, 
Ginsburg and Saks \cite{GS}, and
Saks \cite{sak78}. 
Excellent surveys of results proved until
the mid '80's are  Vaughan \cite{vau84} and Stephenson \cite{ste84}. 
Further results can be found in
Caicedo \cite{Ca} and \cite{sssr},
together with additional references.
 
Ginsburg and Saks \cite{GS} made also  a very effective 
use of  the notion of a $D$-limit point of 
a sequence of subsets of a topological space, and introduced 
the notion of $D$-pseudocompactness. 
See also \cite{gf99}.
If $(Y_i) _{i \in I} $  is a sequence of subsets of $X$,
a point $x \in X$ is said to be 
a \emph{$D$-limit point} of $(Y_i) _{i \in I} $
if $ \{ i \in I \mid  U \cap Y_i  \not= \emptyset    \} \in D $,
for every neighborhood $U$ of $x$
(thus when each $Y_i$ is a singleton we get back the notion
of $D$-convergence).
A topological space $X$ is \emph{$D$-pseudocompact} 
if  every sequence  $(O_i) _{i \in I} $ 
of nonempty open sets of $X$ has a $D$-limit point.
 Clearly,
we can equivalently 
relax the assumption that \emph{all} the $O_i$'s are nonempty
to the assumption that 
$ \{ i \in I \mid   O_i  \not= \emptyset    \} \in D $,
since we can change those $O_i$'s  
  in a set not in $D$ without affecting $D$-limit points of the sequence
(since $D$ is a filter).

Consider the following statement.

(*) $A$ and $B$ are open subsets of a GO space $X$, 
 $A \cup B = X$, and   
$a < b$, for every  $a \in A$ and $b \in B$.

An ordered pair $(A, B)$ satisfying (*) is called a \emph{gap} of $X$ 
in case that neither $A$ has a maximum, nor $B$ has a minimum.
Here we are including \emph{end gaps} in the definition of a gap,
that is, we allow either $A$ or $B$ to be empty. 
An ordered pair $(A, B)$ satisfying (*) is called a \emph{pseudo-gap}
in case that both $A$ and $B$ are nonempty,
and either $A$ has a maximum and $B$ has no minimum,
or $A$ has no maximum and $B$ has a minimum.
Clearly pseudo-gaps can occur only in GO spaces which are not  LOTS,
 since we are asking that $A$ and $B$ are open.

For a property $\mathcal P$, the expression ``$X$ has no (pseudo-)gap
satisfying $\mathcal P$'' shall be used as an abbreviation for ``$X$ 
has neither a gap nor a pseudo-gap satisfying $\mathcal P$''.

\section{$D$-convergence of a given sequence} \labbel{sing}

\begin{proposition} \labbel{single}
Suppose that $X$ is a linearly ordered set,
$D$ is an ultrafilter over some set $I$, 
and $(x_i) _{i \in I} $ is a sequence of elements of $X$.
Set
$A= \{ x\in X  \mid   \{ i \in I \mid x < x_i \}\in D \}$,
and 
$B= \{ x\in X  \mid   \{ i \in I \mid  x_i < x \}\in D \}$.
Then the following statements hold.
  \begin{enumerate}    
\item
$x \in X \setminus (A \cup B) $ if and only if  
$\{ i \in I \mid  x_i = x \} \in D$.
\item 
$|X \setminus (A \cup B)  | \leq 1$. 
\item
If $ x \in X \setminus (A \cup B) $,
then 
$a < x$, for every $a \in A$, and
$x < b$, for every  $b \in B$.
\item
If $a \in A$ and $b \in B$, then 
 $ \{ i \in I \mid  x_i \in (a, b) \} $ belongs to $  D$, hence
it is not empty. 
\item
If $a \in A$ and $b \in B$, then 
$a < b$. 
\item
If $X= A \cup B$, then either $A$ has no maximum or $B$ 
has no minimum. 
\item
If $X$ is a GO space, then
$(x_i) _{i \in I} $ $D$-converges to $x \in X$ if and only if 
one of the following (mutually exclusive) conditions holds.
  \begin{enumerate} 
   \item 
$\{ i \in I \mid  x_i = x \} \in D$, or
\item 
$x$ is the maximum of $A$ and $x \in \overline{B}$, or 
\item
$x$ is the minimum of $B$ and $x \in \overline{A}$.
  \end{enumerate}
\item
If $X$ is a LOTS, then
$(x_i) _{i \in I} $ $D$-converges to $x \in X$ if and only if 
either
  \begin{enumerate} 
   \item 
$\{ i \in I \mid  x_i = x \} \in D$, or
\item 
$x$ is the maximum of $A$ and $A \cup B = X$, or 
\item
$x$ is the minimum of $B$ and $A \cup B = X$.
  \end{enumerate}
\item
If $X$ is a GO space, then
$(x_i) _{i \in I} $ $D$-converges to some $x \in X$ if and only if 
$(A,B)$ is neither a gap, nor a pseudo-gap of $X$. 

In particular, if $X$ is a LOTS, then
$(x_i) _{i \in I} $ $D$-converges in $X$  if and only if 
$(A,B)$ is not a gap. 
\item 
Suppose that $X$ is a GO space
and that $X=A \cup B$.  
For $i \in I$, put
\begin{equation*} 
O_i=\begin{cases}
\bigcup _{a \in A}  (x_i, a) &    \text{if $x_i \in A$},  \\
\bigcup _{b \in B}  (b, x_i) &    \text{if  $x_i \in B$}.\\
\end{cases}
\end{equation*}
Then
$(x_i) _{i \in I} $ $D$-converges to $x $ if and only if 
$x$ is a $D$-limit point of $(O_i) _{i \in I} $.
\end{enumerate} 
  \end{proposition}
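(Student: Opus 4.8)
The plan is to derive both implications from the characterization of $D$-convergence in (7), after recording a few preliminary facts. Since $A$ is an initial segment and $B$ a final segment of $X$ (immediate from the definitions), $A$ and $B$ are disjoint by (5), and by hypothesis they partition $X$; hence, by (1), no point $s \in X$ satisfies $\{ i \in I \mid x_i = s \} \in D$. Using that $A$ is downward closed, one checks that for $x_i \in A$ the set $O_i$ consists of the $y \in A$ with $y > x_i$, with $\max A$ removed when it exists, and dually for $x_i \in B$; the only consequences I shall use are that $O_i \subseteq A$ and every element of $O_i$ is strictly above $x_i$ when $x_i \in A$ (symmetrically when $x_i \in B$), and that $O_i = \emptyset$ can hold only if $x_i$ is one of the at most four elements $\max A$, the predecessor of $\max A$, $\min B$, the successor of $\min B$. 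Finally, it suffices to test the $D$-limit point condition against neighborhoods from the convex open base, and reversing the order of $X$ interchanges the two sides of the picture in a way that preserves the statement, so only one side need be treated in each implication.

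Suppose first that $(x_i) _{i \in I}$ $D$-converges to $x$. By (7), the first alternative there being impossible under our hypothesis, we may assume $x = \max A$ and $x \in \overline{B}$. Let $U$ be a convex open neighborhood of $x$, and set $T = \{ i \mid x_i \in U \} \cap \{ i \mid x < x_i \} \cap \{ i \mid x_i \neq \min B \}$. Then $T \in D$: the first factor by $D$-convergence, the second because $x \in A$, the third by the remark on points of $X$ above. For $i \in T$ we have $x_i > \max A$, so $x_i \in B$; by the description of when $O_i$ is empty, $\{ i \in T \mid O_i = \emptyset \} \notin D$; and if $i \in T$ with $O_i \neq \emptyset$, then any $y \in O_i$ satisfies $y < x_i$ and, since $y \in B$, also $x < y$ by (5), whence $y \in U$ by convexity of $U$. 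Therefore $\{ i \mid U \cap O_i \neq \emptyset \}$ contains $\{ i \in T \mid O_i \neq \emptyset \} \in D$, and since $U$ was arbitrary, $x$ is a $D$-limit point of $(O_i) _{i \in I}$.

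For the converse I would argue by contraposition: assuming that $(x_i) _{i \in I}$ does not $D$-converge to $x$, I produce a convex open neighborhood $U$ of $x$ with $\{ i \mid U \cap O_i \neq \emptyset \} \notin D$. By (7), $x$ is not at once the maximum of $A$ and a point of $\overline{B}$, and not at once the minimum of $B$ and a point of $\overline{A}$; assume $x \in A$, so that $x \neq \max A$ or $x \notin \overline{B}$. If $x \neq \max A$, fix $a_0 \in A$ with $x < a_0$, and, using the $T_2$ property together with the convex open base, choose a convex open $U \ni x$ with $a_0 \notin U$; then every element of $U$ lies below $a_0$, by convexity. Since $a_0 \in A$, the set $\{ i \mid a_0 < x_i \}$ lies in $D$, and for each such $i$ the set $O_i$ lies entirely above $a_0$ — if $x_i \in A$ because $O_i$ lies above $x_i > a_0$, and if $x_i \in B$ because $O_i \subseteq B$, which lies above $a_0$ by (5) — so $U \cap O_i = \emptyset$. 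If instead $x = \max A$ but $x \notin \overline{B}$, choose a convex open $U \ni x$ with $U \cap B = \emptyset$, hence $U \subseteq A$; then $\{ i \mid x < x_i \} \in D$ and, for each such $i$, $x_i > \max A$ forces $x_i \in B$, so $O_i \subseteq B$ and again $U \cap O_i = \emptyset$. In either case $\{ i \mid U \cap O_i \neq \emptyset \}$ is disjoint from a set in $D$, so it is not in $D$.

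The step I expect to require the most care is the possible emptiness of the $O_i$: a priori, if $O_i = \emptyset$ for $D$-many $i$, then $x$ could fail to be a $D$-limit point of $(O_i) _{i \in I}$ even when $(x_i) _{i \in I}$ does $D$-converge to $x$. This is exactly where the hypothesis $X = A \cup B$ is used in an essential way: $O_i = \emptyset$ confines $x_i$ to a fixed set of at most four elements of $X = A \cup B$, and for each such element (1) yields that the corresponding set of indices does not belong to $D$. The remaining ingredients — the explicit form of $O_i$ and the manipulations with convexity and closure — are routine order-theoretic bookkeeping.
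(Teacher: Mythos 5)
Your proposal proves only item (10) of the proposition. Items (1)--(9) are nowhere established; on the contrary, you invoke (1), (5), and (7) of the very statement as known facts (and implicitly (6), via the claim that $A$ downward closed and $B$ upward closed partition $X$). As a proof of the full proposition this is therefore incomplete: the earlier items, in particular the characterization (7) on which your whole argument for (10) rests, would have to be proved first. The paper proves (1)--(9) in sequence by elementary ultrafilter manipulations before reaching (10), and (7) in particular requires a genuine case analysis that your write-up does not supply.

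That said, your argument for (10), taken as a deduction from (1)--(7), is correct and follows essentially the same route as the paper's: both directions reduce to (7)(b)/(7)(c), and the neighborhoods you construct in the converse --- $(-\infty,a_0)$ when $x$ is not the maximum of $A$, and $(-\infty,x]$ when $x=\max A$ but $x\notin\overline{B}$ --- are exactly the ones the paper uses (the paper phrases the converse directly, deriving (7)(b) from the $D$-limit hypothesis, whereas you argue by contraposition; the two are interchangeable). The one substantive variation is your treatment of the possible emptiness of the $O_i$: you classify the at most four values of $x_i$ for which $O_i=\emptyset$ and kill each by (1), whereas the paper observes that in case (7)(b) the set $B$ has no minimum by (6), so $O_i\neq\emptyset$ automatically whenever $x_i\in B$. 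Your version is slightly more laborious but equally valid. So the verdict is: the argument for (10) is sound and essentially the paper's, but the proposal as a whole has a genuine gap in that nine of the ten items, including the prerequisite (7), are left unproved.
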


 \begin{proof}
(1) Since $D$ is an ultrafilter,
it follows that, for each $x \in X$, one and exactly one of the following sets:
$ \{ i \in I \mid x < x_i \}$,
$\{ i \in I \mid  x_i < x \}$,
$\{ i \in I \mid  x_i = x \}$ belongs to $D$,
since they are disjoint and their union is $I$.
Hence, by the definitions,  $x\not\in A \cup B$
if and only if  the  last eventuality occurs.

(2) If  $x,y\not\in A \cup B$, then, by (1),
both $X=\{ i \in I \mid  x_i = x \} $  and $Y=\{ i \in I \mid  x_i = y \} $ 
are in $  D$.
Since $D$ is  a filter, $ X \cap Y \in D$,
and,  since $D$ is proper, $ X \cap Y  \not= \emptyset   $.
If $ i \in X \cap Y $, then $x = x_i = y$. 

(3) 
If $a \in A$ and 
$ x \not\in  (A \cup B) $,
then both
$ \{ i \in I \mid a < x_i \}$ and  
$\{ i \in I \mid  x_i = x \}$ belong to $D$, by (1).
As above, their intersection is not empty,
and this implies $a < x$. 
In the same way, we get that 
$x < b$, for every  $b \in B$.

(4) If $a \in A$ and $b \in B$, then
$ \{ i \in I \mid a < x_i \}$ and  
$\{ i \in I \mid  x_i < b \}$ belong to $D$.
The intersection of the above two sets is 
$ \{ i \in I \mid  x_i \in (a, b) \}$,
therefore this set belongs to $D$, hence is nonempty.

(5) is immediate from (4).

(6) By contradiction, if $x$ is the maximum of $A$ and  $y$ is the minimum
of $B$, then, by (5) and since $X=A \cup B$, $y$ is the immediate successor of
$x$. Both
$\{ i \in I \mid x< x_i \}$ 
and
$\{ i \in I \mid x_i< y \}$ 
belong to $D$, since $x \in A$ 
and $y \in B$. 
But the above two sets are disjoint, a contradiction.

(7)  Suppose that $(x_i) _{i \in I} $ $D$-converges to $x \in X$.
By (1), (2) and the uniqueness of $D$-limits in Hausdorff 
spaces, either $ x \not\in  A \cup B$
and (a) holds, or
 $X= A \cup B$.
In this latter case, either $x \in A$ or $x \in B$.
 
Suppose that $ x \in A$. 
If $x$ is not the maximum of $A$, there is $x' \in A$
such that $x < x'$.
Then $( -\infty, x')$ is a neighborhood of 
$x$, and     
$\{ i \in I \mid x' < x_i\} \in D$,
since $x' \in A$. But then
$\{ i \in I \mid x_i \in ( -\infty, x') \} \not\in D$,
contradicting $D$-convergence.
Hence $x$ is the maximum of $A$. 
If $x \not\in \overline{B}$, then,
by (5) and since 
 $A \cup B=X$,
$(- \infty, x]$ is a neighborhood of $x$, 
hence, by $D$-convergence, 
$\{ i \in I \mid x_i \leq x\} \in D$, 
but this contradicts $x\in A$.
Next, notice that if $x \in A \cap \overline{B}$,
then necessarily $x$ is  the maximum of $A$,
because of (5). Hence $x \in \overline{B}$.

Symmetrically, if $x \in B$, 
then  $x$ is the minimum of $B$ and $x \in \overline{A}$.
Hence one among the conditions (a)-(c) holds.

Conversely,
if (a) holds, then trivially
$(x_i) _{i \in I} $ $D$-converges to $x$.

Suppose that (b) holds.
Since $x \in A$, then, by (4), 
$ \{ i \in I \mid x < x_i < b\} \in D$,
for every $b \in B$.
Since 
$x \in \overline{B}$.
then, for every neighborhood $U$ of $x$, 
there is  some $b \in B$ such that 
$U \supseteq [x,b)$,
but then
$ \{ i \in I \mid x_i \in U\}
\supseteq \{ i \in I \mid x < x_i < b\}
 \in D$,
thus 
$(x_i) _{i \in I} $ $D$-converges to $x$.

Case (c) is proved in a symmetrical way.

Notice that (b) and (c) are mutually exclusive, since 
$A$ and $B$ are disjoint, by (5). Conditions (b) and (c) are also both mutually
exclusive with (a)
 by  (1).

(8) Since every LOTS is, in particular, a GO space, then Condition (7) applies.
Hence it is enough to prove that, say, in every LOTS (8)(b) implies (7)(b). 
Indeed, if (8)(b) holds, then, 
since $A \cup B=X$, by (4) and (6), $x$  is an infimum of $B$,
but in a LOTS this implies $x \in \overline{B}$, thus (7)(b) holds
(notice that, by the very definition of $A$, if $A$ has a maximum, then 
$X \setminus A=B$
is not empty).

(9) We use the characterization given in (7). If one of (a), (b) or (c) in (7)
holds, then trivially $(A,B)$ is neither a gap nor a pseudo-gap. 

Conversely, suppose that $(A,B)$ is not a  (pseudo-)gap.
If $A \cup B \not = X$, then (7)(a) holds,
for some $x \in X$, by (1).
Otherwise, $A \cup B = X$.
Since we are including end gaps in our definition of a gap,
both $A$ and $B$  are nonempty. By (5), and since 
$A \cup B = X$, if neither $A$ has a maximum, nor $B$ 
has a minimum, then both $A$ and $B$ are open, contradicting the assumption
that $(A, B)$ is not a gap. Assume that, say, $A$ has a maximum $x$.
Since $A \cup B = X$, then $x$ has no immediate successor, thus $B$ has
no minimum, since $x$ is the maximum of $A$.
If  $x \not\in \overline{B}$, then $B$ would be clopen,
and $(A, B)$ would be a pseudo-gap.
Thus  $x \in \overline{B}$
and 6(b) holds.

(10) Suppose that $(x_i) _{i \in I} $ $D$-converges to $x $.
By (1), we are either in case (7)(b) or (7)(c). Suppose, say, that we
are in case (7)(b). Since $x \in \overline{B}$,  
then every neighborhood $U$ of $x$ 
contains an interval $[x,b']$, for some $b' \in B$.
For every $i \in I$ such that $x_i \in B$, 
there is $b \in B$  such that $b > \sup \{ b', x_i \} $, by (6). 
Thus
$b' \in [x, b'] \cap (b, x_i) \subseteq U \cap O_i$,
hence 
$U \cap O_i  \not= \emptyset   $.
Since $x$ is the maximum of $A$,
then, by (4), $\{ i \in I \mid x_i \in B \} 
= \{ i \in I \mid x < x_i  \} \in D$,
thus $x$ is a $D$-limit point of $(O_i) _{i \in I} $.

Conversely, suppose that 
$X= A \cup B$, and  that
$x$ is a $D$-limit point of $(O_i) _{i \in I} $.
Since $X= A \cup B$, then either 
$\{ i \in I \mid x_i \in A \} \in D$ or 
$\{ i \in I \mid x_i \in B \} \in D$, say, the latter eventuality occurs.
We first show that $x \not \in B$.
Indeed, if $x \in B$,
then
$\{ i \in I \mid x_i < x \} \in D$ and, since
$\{ i \in I \mid x_i \in B \} \in D$, then
$\{ i \in I \mid x_i \in B \cap (- \infty, x) \} \in D$.
In particular, the above set is not empty, hence there is
$x' < x$ such that $x' \in B$. 
Arguing in the same way,
$\{ i \in I \mid x_i \in B \cap (- \infty, x') \} \in D$.
But then, letting $U= (x', \infty)$, we have that $U$ is a neighborhood 
of $x$ such that $\{ i \in I \mid U \cap O_i = \emptyset \}\in D$,
since   $ O_i \subseteq (- \infty, x')  $,
whenever $x_i \in B \cap (- \infty, x')$.
This contradicts the assumption that
$x$ is a $D$-limit point of $(O_i) _{i \in I} $,
hence 
$x \not \in B$.
Thus $x \in A$. Moreover, $x$ is the maximum of $A$.
If not, there is some $x' \in A$ such that  $x< x'$.
Then $U=(-\infty, x')$ is a neighborhood of $x$ such that
$U \cap O_i = \emptyset$, for a set of indices in $D$,
namely, for    $\{ i \in I \mid x_i \in B \}$, by (5), again contradicting
the assumption that
$x$ is a $D$-limit point of $(O_i) _{i \in I} $.
In view of (7)(b), in order to finish the proof, 
it is enough to show that 
$x \in \overline{B}$. If not, $U=(-\infty, x]$ is a neighborhood of
$x$, and we can get a contradiction arguing as before,
since by (5) $A$ and $B$ are disjoint,
$A=U=(-\infty, x]$,
and $ O_i \subseteq B$, whenever $x_i \in B$.  
 \end{proof}

\begin{remark} \labbel{iff}   
From Proposition \ref{single}(10) it follows easily
that, for every ultrafilter $D$ and every GO space $X$, $D$-compactness 
of $X$ is equivalent
to $D$-\brfrt pseudocompactness of $X$.     
The only-if part is trivial. 
For the converse,
suppose that $X$ is $D$-\brfrt pseudocompact, and that 
$(x_i) _{i \in I} $ is a sequence of elements of $X$.
If $X \not = A \cup B$, then 
 $(x_i) _{i \in I} $ $D$-converges, by \ref{single}(1).
Otherwise,
define $(O_i) _{i \in I} $ as in (10).
It is enough to show that
$ \{ i \in I \mid   O_i  \not= \emptyset    \} \in D $,
since then we can  apply $D$-pseudocompactness to 
$(O_i) _{i \in I} $ and use (10).
Since $X= A \cup B$,
then either  
 $\{ i \in I \mid x_i \in A \} \in D$, or 
 $\{ i \in I \mid x_i \in B \} \in D$.
Say the latter case occurs, then,
by the definition,
$B$ has no minimum, hence,
 for every $i \in I$ such that $x_i \in B$,
 we have $O_i= B \cap (- \infty, x_i) \not= \emptyset $. 
Hence the $O_i$'s are nonempty for a set of indices in $D$,
namely, a set containing $\{ i \in I \mid x_i \in B \}$.  
\end{remark} 

However, much more can be said about 
the connections among $D$-compactness, $D$-pseudocompactness 
and other properties of a GO space $X$.
This is the main theme of the next section.

\section{$D$-compactness, $D$-pseudocompactness, gaps, products, etc.} \labbel{secth} 

To state the results of the present section
in their full generality, we need some more definitions.

An ultrafilter $D$ over some set $I$ is 
 \emph{$\lambda$-descendingly complete} if  
every 
$ \subseteq $-decreasing sequence
$(Z _ \alpha ) _{ \alpha \in \lambda } $ 
of sets in $D$ has intersection still in $D$.
The ultrafilter $D$ is \emph{$\lambda$-decomposable}
if there is a function $f:I \to \lambda $
such that $f ^{-1}(X) \not \in D $, for every 
$X \subseteq \lambda $ such that $|X| < \lambda $.
Clearly, if $\lambda$ is an infinite cardinal,     
a $\lambda$-decomposable ultrafilter is not $\lambda$-descendingly complete:
just consider $Z _ \alpha = f ^{-1}([ \alpha , \lambda )) $.
If $\lambda$ is regular, the converse holds.
If $\lambda$ is regular and  $D$  is not $\lambda$-descendingly complete,
as witnessed by $(Z _ \alpha ) _{ \alpha \in \lambda } $,
the following function $f$ witnesses that 
$D$ is $\lambda$-decomposable. Define $f$  by letting 
$f(i)$ be the smallest  $\alpha$ such that 
$i \not\in Z _ \alpha $, if such an $\alpha< \lambda $ exists;
 $f(i)$ is unimportant and can be arbitrary if 
$ i \in \bigcap _{ \alpha \in \lambda } Z _ \alpha $,
since we are assuming that $ \bigcap _{ \alpha \in \lambda } Z _ \alpha \not \in D$. 
Thus, for infinite regular cardinals, 
$\lambda$-decomposability is equivalent to the negation
of $\lambda$-descending completeness. 
Let us also mention  that
there is another notion equivalent to $\lambda$-\brfrt decomposability, 
for $\lambda$ an infinite regular cardinal,
that is, \emph{$ ( \lambda , \lambda )$-regularity}. We shall
make only a limited use of $ ( \lambda , \lambda )$-regularity here
(see Sections \ref{secinit} and \ref{setth}), but we should warn the reader that
 theorems about 
$\lambda$-descending (in)completeness 
or $\lambda$-decomposability are 
frequently stated in equivalent
forms in terms of regularity, or vice versa.
A full discussion is given in \cite{mru};
see Section 1 there and, in particular, Properties 1.1(xi) and Consequence 1.2.

For an ultrafilter $D$, let 
$K_D= \{ \nu \geq \omega  \mid D \text{ is $\nu$-decomposable}\} $.
Many problems are still open about the possible values
 $K_D$  can assume,
and solutions to such problems are heavily dependent on the 
universe of set theory one is working in. 
For example, in certain models of set theory, $K_D$
is always an interval of cardinals, with bottom element $ \omega$;
on the other hand, there are models in which
$K_D$ is a rather sparse set of cardinals.
We refer
to the comments after Problem 6.8 in \cite{mru}  
for further information and details,
or to the remarks before Corollary \ref{gocorbbb} here, 
where we
 show that such set theoretical problems affect the behavior 
under
products
of 
certain compactness properties of GO spaces. In particular, certain statements turn out
to be independent from the usual axioms for set theory.
In this paper we shall be mainly concerned with \emph{regular}
cardinals in $K_D$, hence we shall establish the special notation
$K_D ^{Reg}$ for the set of such cardinals. 
Namely, 
$K_D^{Reg}= \{ \nu \geq \omega  \mid \nu \text{ is regular and } D \text{ is $\nu$-decomposable}\} $.

A topological space is \emph{$[ \nu, \lambda ]$-compact}
if every open cover of cardinality $\leq \lambda $ has a subcover of cardinality
$<\nu$. It is a classical result 
by Alexandroff and Urysohn \cite{AU} that
if $\nu$ is a regular cardinal, 
then $[ \nu, \nu ]$-compactness of a topological space $X$ 
is equivalent 
to $\CAP_ \nu$, which is the property asserting
that every subset  of $X$ of cardinality $\nu$
has a complete accumulation point. 
 \emph{Initial $\lambda$-compactness}
is $[ \omega , \lambda ]$-compactness, 
and, again by \cite{AU},
it is equivalent to 
$[ \nu, \nu ]$-compactness,
for every cardinal $\nu \leq \lambda $, 
equivalently, for every \emph{regular} cardinal $\nu \leq \lambda$. 
As usual, by  $[ \nu, \lambda ]$ we shall also denote the \emph{interval}
of all cardinals $\mu$ such that $\nu\leq \mu \leq \lambda$.
We hope that the partially overlapping notation will cause no confusion. 

A topological space is \emph{weakly $[ \nu, \lambda ]$-compact}
if every open cover of cardinality $\leq \lambda $ has a subfamily of cardinality
$<\nu$ with dense union.
\emph{Weak initial $\lambda$-compactness} 
is  weak $[ \omega , \lambda ]$-compactness.
The above notions are frequently named using very
disparate terminology. 
Weak $[ \nu, \lambda ]$-compactness 
has sometimes been called 
\emph{weak $\lambda$-$\nu$ compactness},
and we used still different terminology in 
\cite{tproc2}, where we called it
$\mathcal O$-$[ \nu, \lambda ]$-compactness.
When $\nu$ is a regular cardinal,
weak $[ \nu, \nu ]$-compactness is equivalent
to a notion called \emph{pseudo-$( \kappa , \kappa  )$-compactness}. 
Weak initial $ \omega $-compactness
is equivalent to  \emph{faint compactness}, 
and equivalent to pseudocompactness in the class of Tychonoff spaces.
Weak initial $\lambda$-compactness has been called
\emph{almost $\lambda$-compactness}, or  
\emph{weak-$\lambda$-$\aleph_0$-compactness} by some authors.
See \cite[Remark 3]{tapp2} for further details and references.

If $ \lambda $ is an infinite cardinal,  a sequence
 $(x_ \gamma ) _{ \gamma < \lambda  } $ of elements of a topological space 
\emph{converges} to some point $x$   if,
for every neighborhood $U$ of $x$,
there is $\gamma < \lambda  $ such that 
$x _{ \gamma '} \in U $, for every $\gamma' > \gamma $.   

If $(A,B)$ is a gap of some GO space,
we say that that \emph{$\lambda$ is a type of}   $(A,B)$
if either $A$ has cofinality $\lambda$, or $B$ has
coinitiality $\lambda$ (thus a gap has at most two types).
 If $(A,B)$ is a pseudo-gap, say, $B$ having a minimum,
the \emph{type of} $(A,B)$ is the cofinality of $A$,
and, symmetrically, the coinitiality of $B$, if $A$ has a maximum.

The above two notions are related as follows.
If $(x_ \gamma ) _{ \gamma < \lambda  } $ 
is a non converging strictly increasing sequence
of elements of a GO  space $X$,
then, letting
$A = \{ x \in X \mid x < x_ \gamma \text{ for some  } \gamma < \lambda  \}$,
and $B=X \setminus A$,  
we have that $(A,B)$ is a (pseudo-)gap
having type $\lambda$.
A symmetrical statement holds for 
strictly decreasing sequences.
Conversely,
given 
 a (pseudo-)gap
$(A,B)$ having type $\lambda$,
we obtain
a strictly monotone non converging sequence
$(x_ \gamma ) _{ \gamma < \lambda  } $,
by considering either a cofinal subset of $A$ or
a coinitial subset of $B$.

\begin{theorem} \labbel{thm}
Suppose that $D$ is an ultrafilter over some set $I$,
and recall that $K_D ^{Reg}$ is the set of those infinite regular cardinals $\nu$ 
such that $D$ is $\nu$-decomposable.
For  every GO
space $X$, the following conditions are equivalent.
 \begin{enumerate}
\item 
$X$ is $D$-compact.
\item
$X$ is $[ \nu, \nu]$-compact, for every  cardinal $\nu \in K_D ^{Reg}$.
\item
For every   cardinal $ \nu \in K_D ^{Reg}$,
and every strictly increasing (resp., strictly decreasing) $\nu$-indexed sequence
of elements of $X$, the sequence has a supremum (resp., an infimum) to which it
converges.
\item
$X$ has no (pseudo-)gap of type belonging to $K_D ^{Reg}$.  
\item 
$X$ is $D$-pseudocompact.
\item
$X$ is weakly $[ \nu, \nu]$-compact, for every   cardinal $\nu \in K_D ^{Reg}$.
\item
For every cardinal $ \nu \in K_D  ^{Reg} $,
and every sequence  $(O_ \gamma ) _{\gamma < \nu}$ of  open 
 nonempty sets of $X$,   
 there is $x \in X$ such that 
$|\{\gamma < \nu \mid O_ \gamma  \cap U \not= \emptyset \}|= \nu$, 
for every neighborhood $U$ of $x$.   

In the  above condition we can equivalently require either that the $O_ \gamma $'s
are pairwise disjoint, or  that $O_ \gamma \subset O _{ \gamma '} $,
for $\gamma> \gamma '$.  
\item
$X$ is $D'$-compact,
for every ultrafilter $D'$ such that 
$K _{D'}  ^{Reg} \subseteq K_D ^{Reg}$.
\item
$X$ is $D'$-pseudocompact,
for every ultrafilter $D'$ such that 
$K _{D'}^{Reg} \subseteq K_D ^{Reg}$.
  \end{enumerate} 
 \end{theorem}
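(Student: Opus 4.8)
The plan is to make $(1)\Leftrightarrow(4)$ the engine of the whole chain and to let each of the remaining conditions enter the cycle through a single short implication, so that essentially all the work is concentrated in one place.

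\textbf{The engine $(1)\Leftrightarrow(4)$.} This rests entirely on Proposition \ref{single}(9): a fixed sequence $(x_i)_{i\in I}$ $D$-converges if and only if the pair $(A,B)$ it determines is not a (pseudo-)gap. For $(4)\Rightarrow(1)$, take an arbitrary $(x_i)_{i\in I}$ with its associated $A,B$; if $(A,B)$ were a (pseudo-)gap it would have a type $\mu$, and I claim $D$ is then $\mu$-decomposable, which puts $\mu$ in $K_D^{Reg}$ and contradicts $(4)$. Indeed $A\cup B=X$, so by the ultrafilter property one of $\{i:x_i\in A\}$, $\{i:x_i\in B\}$ lies in $D$; reversing the order if need be, assume the former; then $A$ has no maximum, so $\mu=\cf(A)$ is a type. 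Choosing a strictly increasing cofinal $(a_\gamma)_{\gamma<\mu}$ in $A$ and defining $f\colon I\to\mu$ by $f(i)=$ the least $\gamma$ with $x_i<a_\gamma$ if $x_i\in A$ and $f(i)=0$ otherwise, one checks that for $S\subseteq\mu$ with $|S|<\mu$, writing $\delta=\sup S<\mu$, the set $f^{-1}(S)$ is contained in $\{i:x_i\notin A\}\cup\{i:x_i\le a_\delta\}$, a union of two sets not in $D$ (the first by the case assumption, the second since $a_\delta\in A$), hence $f^{-1}(S)\notin D$. For $(1)\Rightarrow(4)$: if $X$ has a (pseudo-)gap $(A_0,B_0)$ of type $\mu\in K_D^{Reg}$, say with $\cf(A_0)=\mu$, pick a $\mu$-decomposability function $f$ and a strictly increasing cofinal $(a_\gamma)_{\gamma<\mu}$ in $A_0$ and put $x_i:=a_{f(i)}$; using that $f^{-1}$ of a bounded subset of $\mu$ is not in $D$, the pair determined by this sequence is exactly $(A_0,B_0)$, so by Proposition \ref{single}(9) it does not $D$-converge.

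\textbf{Bringing in $(2)$ and $(3)$.} Condition $(3)\Leftrightarrow(4)$ is the dictionary recalled just before the theorem between (pseudo-)gaps of type $\nu$ and strictly monotone non-converging $\nu$-sequences, together with the observation that in a GO space a strictly increasing (resp.\ decreasing) $\nu$-sequence either converges to its supremum (resp.\ infimum) or produces such a (pseudo-)gap. For $(1)\Rightarrow(2)$, given $\nu\in K_D^{Reg}$ and an open cover $\{U_\xi:\xi<\nu\}$ with no subcover of size $<\nu$, choose $x_\delta\notin\bigcup_{\xi<\delta}U_\xi$ for each $\delta<\nu$, apply $D$-compactness to $(x_{f(i)})_{i\in I}$ with $f$ a $\nu$-decomposability function, and note that the limit lying in some $U_{\xi_0}$ forces $\{i:f(i)\le\xi_0\}\in D$, a contradiction. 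For $(2)\Rightarrow(4)$: a (pseudo-)gap of type $\nu$ yields, via a strictly increasing cofinal sequence in its lower side, a set $S$ of size $\nu$ with no complete accumulation point (a point of the lower side is cut off by a suitable ray $(-\infty,a_\delta)$; a point of the upper side is separated from $S$ since the two sides of a (pseudo-)gap are clopen), so by Alexandroff--Urysohn \cite{AU} $X$ fails $[\nu,\nu]$-compactness. This yields the cycle $(1)\Rightarrow(2)\Rightarrow(4)\Rightarrow(1)$.

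\textbf{The pseudocompactness cluster and the robustness clauses.} $(1)\Leftrightarrow(5)$ is Remark \ref{iff}. For $(6)$: trivially $(2)\Rightarrow(6)$, while $(6)\Rightarrow(4)$ by contraposition, since a (pseudo-)gap of type $\nu$ supplies the open cover $\{(-\infty,a_\gamma):\gamma<\nu\}\cup\{B\}$, any $<\nu$ of whose members have union with closure inside $(-\infty,a_\delta]\cup B$ for some $\delta<\nu$, hence not dense. For $(7)$ in all three forms: $(2)$ implies the general form via \cite{AU} and a pigeonhole on chosen points $p_\gamma\in O_\gamma$; the general form trivially implies its disjoint and decreasing restrictions; and each restriction implies $(4)$ by contraposition through a gap construction like the one for $(6)$ (for the decreasing form take $O_\gamma=A\cap(a_\gamma,\infty)$; for the disjoint form thin these to pairwise disjoint nonempty open pieces clustering at the $a_\gamma$'s). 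Finally $(4)\Rightarrow(8)$ and $(4)\Rightarrow(9)$ because $(4)$ refers only to $K_D^{Reg}$ and is monotone in it: if $K_{D'}^{Reg}\subseteq K_D^{Reg}$ then $X$ has no (pseudo-)gap of type in $K_{D'}^{Reg}$, so by $(1)\Leftrightarrow(4)\Leftrightarrow(5)$ applied to $D'$ it is $D'$-compact and $D'$-pseudocompact; taking $D'=D$ gives $(8)\Rightarrow(1)$ and $(9)\Rightarrow(5)$.

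\textbf{Main obstacle.} The delicate point is the decomposability/gap-type correspondence inside $(1)\Leftrightarrow(4)$: checking that $f$ genuinely witnesses $\mu$-decomposability (which requires the case split on which of $\{i:x_i\in A\}$, $\{i:x_i\in B\}$ belongs to $D$, and the bookkeeping that preimages of bounded subsets of $\mu$ avoid a fixed set not in $D$), treating gaps and pseudo-gaps uniformly, and handling degenerate cases (end gaps, $A$ or $B$ empty, where the only available type is the cofinality or coinitiality of the nonempty side). The order-reversing symmetry halves the casework; everything else is routine manipulation with ultrafilters and the definitions of the covering properties, together with the already-established Proposition \ref{single} and Remark \ref{iff}.
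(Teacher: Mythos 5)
Your proposal is correct and follows essentially the same strategy as the paper: all conditions are funnelled through the gap condition (4) via Proposition \ref{single}(9), with the decomposability/gap-type correspondence as the combinatorial core (your direct construction of the decomposing function $f$ is just the paper's passage through $\nu$-descending incompleteness made explicit), Remark \ref{iff} handling (5), and the same monotonicity observation in $K_{D}^{Reg}$ giving (8) and (9). The only notable divergence is cosmetic: where the paper cites general-space results from \cite{tproc2} for the links between (5), (6) and (7), you give self-contained GO-space arguments routing these conditions through (2) and (4), which is a harmless (arguably welcome) substitution.
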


\begin{proof}
(1) $\Rightarrow $  (2) It is enough to show that if 
$\nu$ is regular and $D$ is $\nu$-decomposable, 
then every $D$-compact topological space is $[ \nu, \nu]$-compact.
This is essentially a classical result (which holds for every topological space),
due to \cite{GS} in the case $\nu= \omega $, and to 
\cite{sak78} in the general case.
See also \cite{Ca} and \cite{tproc2} for other versions and generalizations.
Briefly,  and using the equivalent reformulation of
$[ \nu, \nu]$-compactness in terms of complete accumulation points, 
if $Y \subseteq X$ and $|Y |= \nu$,
enumerate $Y$ as $\{ y _ \gamma \mid \gamma \in \nu\}$.
If $f:I \to \nu$ witnesses the $\nu$-decomposability of $D$,
then, by $D$-compactness, the sequence $(y _{f(i)} ) _{i \in I} $  
has some $D$-limit point, which is easily seen to be also a complete accumulation
point of $Y$.

(2) $\Rightarrow $  (3) is almost immediate.
Indeed, if, by contradiction, $(x_ \gamma ) _{ \gamma < \nu} $ is, say,
strictly increasing, then  
$ O = X \setminus \bigcup _{\gamma < \nu} (- \infty, x_ \gamma ) $
is open. Then the family containing $ O$  and
$(- \infty, x_ \gamma ) $, for $\gamma < \nu$,
is an open cover by $\nu$ sets, but no subfamily by $<\nu$ sets is a cover,
since $\nu$ is regular.  

The equivalence of (3) and (4) should be clear from the remarks shortly before
the statement of the theorem.

The proof of (4) $\Rightarrow $  (1) is the key argument in the proof
of the theorem, and uses in an essential way the assumption that
we are in a GO space.
Suppose by contradiction that (1) fails
and let $(x_i) _{i \in I} $ be a sequence which does not $D$-converge.
Let $A$ and $B$ be defined as in the statement of Proposition \ref{single}.
By condition (9) in the same proposition, $(A,B)$ is either a gap or a pseudo-gap
of $X$, in particular, $A \cup B =X$. Hence either
$\{i \in I \mid x_i \in A\} \in D$,  or $\{i \in I \mid x_i \in B\} \in D$.
Suppose, say, that the latter occurs. Then
by the assumption that $\{i \in I \mid x_i \in B\} \in D$, by the definition of $B$,
and since $D$ is a filter, we get that, for every $b \in B$, 
$X_b=\{i \in I \mid x_i \in (-\infty, b) \cap B\} \in D$.
Notice that it follows that $B$ has no minimum,
since each $X_b$ is nonempty.
Choose a strictly decreasing coinitial sequence $(b_ \gamma ) _{ \gamma \in \nu} $ in $B$.
Then $\nu$ is a type of the (pseudo-)gap $(A,B)$,
and $ (X _{b_ \gamma }) _{ \gamma \in \nu} $ 
witnesses that $D$ is not $\nu$-descendingly complete,
since 
$ \bigcap  _{ \gamma \in \nu} X _{b_ \gamma }=
\{i \in I \mid x_i \not\in B\} \not\in D $. 
Thus $\nu\in K_D ^{Reg}$, by the equivalence mentioned after the definition 
of decomposability, and since $\nu$ is regular,
being the coinitiality of $B$. Thus $(A,B)$  has type $\nu \in K_D ^{Reg}$, 
contradicting (4). 

So far, we have proved the equivalence of (1)-(4).

(1) $\Rightarrow $  (5) is trivial.

(5) $\Rightarrow $  (6) is similar to (1) $\Rightarrow $  (2),
by considering a sequence of nonempty open sets of $X$, rather than a sequence 
of elements of $X$.
Full details can be found in \cite[Fact 6.1, Corollary 4.6 and condition (d) in Theorem 4.4]{tproc2}, by taking $\mathcal F = \mathcal O$ there. 

The equivalence of (6) with the first statement in  (7) is true in every topological space,
and is similar to the equivalence of $[ \nu, \nu]$-compactness with
$\CAP_ \nu$ (recall that we 
are assuming that $\nu$ is regular). Details can be found again in 
\cite{tproc2}, by taking $\mathcal F = \mathcal O$
in Theorem 4.4(a) $\Leftrightarrow $  (c) there.

For GO spaces, (7) $\Rightarrow $  (3) is a standard argument. 
Say, $(x_ \gamma ) _{ \gamma \in \nu} $ is strictly increasing.
Put $O_ \gamma = \bigcup _{ \eta > \gamma } (x_ \gamma , x_ \eta) $.
Then if $x$ is such that  
$|\{\gamma < \nu \mid O_ \gamma  \cap U \not= \emptyset \}|= \nu$, 
for every neighborhood $U$ of $x$, then necessarily
$(x_ \gamma ) _{ \gamma \in \nu} $ converges to $x$.
Here the $O_ \gamma $'s are strictly decreasing with respect to inclusion.

If we want the $O_ \gamma $'s  to be pairwise disjoint,
then, for $\gamma= \alpha + n$ with $\alpha=0$ or $\alpha$ limit,
take   $O_ \gamma = (x _{ \alpha + 2n}, x  _{ \alpha + 2n+2}  )$.

Hence (1)-(7) are all equivalent, for every ultrafilter $D$ and every GO space $X$.

(2) $\Rightarrow $  (8)  Suppose that $D'$ is such that 
$K _{D'}^{Reg} \subseteq K_D^{Reg}$.
If (2) holds, then, since $K _{D'}^{Reg} \subseteq K_D^{Reg}$,
$X$ is $[ \nu, \nu]$-compact, for every   cardinal $\nu \in K _{D'}^{Reg}$.
Applying the equivalence of (1) and (2) \emph{in the case of the ultrafilter $D'$},
we get that $X$ is $D'$-compact. 

(8) $\Rightarrow $  (1) follows trivially by taking $D=D'$.

The equivalence of (9) with, say, (6) and (5)  is similar.
\end{proof}  

Given a family $(X_j) _{j \in J} $ of 
GO
spaces, the product 
$ \prod _{j \in J} X_j$ is not necessarily a GO space;
however, it is a topological space, when endowed
with the (Tychonoff) product topology.
$ \prod _{j \in J} X_j$ can also be given the structure of a 
partially ordered set, by letting the relation $ \mathbf{x}\leq \mathbf{y}$ 
hold in $ \prod _{j \in J} X_j$ if and only if it holds componentwise. 
The next corollary
deals with the above structures on $ \prod _{j \in J} X_j$.
It shows that if each $X_j$ is a GO space, then many properties of $ \prod _{j \in J} X_j$
are determined by the corresponding properties of
the $X_j$'s.  

To avoid trivial exceptions, we shall always assume that
all factors in a product are nonempty.

\begin{corollary} \labbel{corthm}
Suppose that $D$ is an ultrafilter over some set $I$.
For  every family
$(X_j) _{j \in J} $ of 
GO
spaces, the following conditions are equivalent.
  \begin{enumerate}    
\item[(a)] 
For every $j \in J$, the GO space $X_j$ satisfies one (and hence all) of the conditions
in Theorem \ref{thm}.   
\item[(b)] 
The topological space  $ \prod _{j \in J} X_j$ satisfies all of the conditions
(1)-(2),(5)-(9) in Theorem \ref{thm}.  
\item[(c)] 
The topological space  $ \prod _{j \in J} X_j$ satisfies one of the conditions
(1)-(2),(5)-(9) in Theorem \ref{thm}.  
\item[(d)] For every   cardinal $\nu \in K_D ^{Reg}$, every monotone $\nu$-indexed sequence in   $ \prod _{j \in J} X_j$ converges.
  \end{enumerate} 
 \end{corollary}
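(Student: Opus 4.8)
Here is the plan. Since $\prod_{j\in J}X_j$ need not itself be a GO space, Theorem \ref{thm} cannot be applied to it directly, so the whole argument transfers information between the product and its factors. Two ingredients do the transferring. First, for an \emph{arbitrary} ultrafilter $D'$, $D'$-compactness is productive: given an $I$-indexed sequence in $\prod_{j\in J}X_j$, pick for each $j$ a $D'$-limit $y_j\in X_j$ of the $j$-th coordinate sequence; then $(y_j)_{j\in J}$ is a $D'$-limit of the original sequence in the product topology. Second, each of the properties named in conditions (1), (2), (5)--(9) of Theorem \ref{thm} is inherited by continuous surjective images, hence in particular passes from $\prod_{j\in J}X_j$ to every factor $X_j$ along the projection $\pi_j$, which is onto because all factors are nonempty. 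This last point is a routine verification, property by property: for $[\nu,\nu]$-compactness and weak $[\nu,\nu]$-compactness one pushes forward a cover, resp.\ a $<\nu$-subfamily with dense union; for $D'$-compactness, $D'$-pseudocompactness and the limit-point property in (7) one lifts the given sequence along $\pi_j$, finds a $D'$-limit or a limit point upstairs, and pushes it down, using that $\pi_j^{-1}(W)\neq\emptyset$ forces $W\neq\emptyset$; and conditions (8), (9) are inherited since they are conjunctions of the $D'$-compactness, resp.\ $D'$-pseudocompactness, conditions.

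Granting these, the cycle (a)$\Rightarrow$(b)$\Rightarrow$(c)$\Rightarrow$(a) runs as follows. For (a)$\Rightarrow$(b): by (a) each $X_j$ is $D$-compact, so, being a GO space, it satisfies condition (8) of Theorem \ref{thm}, i.e.\ $X_j$ is $D'$-compact for every $D'$ with $K_{D'}^{Reg}\subseteq K_D^{Reg}$; by productivity $\prod_{j\in J}X_j$ is then $D'$-compact for every such $D'$, which is exactly conditions (1) and (8) for the product, and conditions (2), (5), (6), (7), (9) for the product follow from these by the implications valid in \emph{every} topological space that were recorded in the proof of Theorem \ref{thm} ((1)$\Rightarrow$(2), (1)$\Rightarrow$(5), (5)$\Rightarrow$(6)$\Leftrightarrow$(7), and the trivial (8)$\Rightarrow$(9)). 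Implication (b)$\Rightarrow$(c) is immediate. For (c)$\Rightarrow$(a): if $\prod_{j\in J}X_j$ satisfies one of (1), (2), (5)--(9), then by the inheritance property above each $X_j$ satisfies the same condition, and since $X_j$ is a GO space, Theorem \ref{thm} upgrades this to all of its conditions, in particular $X_j$ is $D$-compact; so (a) holds.

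It remains to weave in (d). For (a)$\Rightarrow$(d), fix $\nu\in K_D^{Reg}$ and, say, an increasing $\nu$-indexed sequence $(\mathbf x_\gamma)_{\gamma<\nu}$ in $\prod_{j\in J}X_j$; each coordinate sequence $(x_{\gamma,j})_{\gamma<\nu}$ is then weakly increasing in $X_j$. If such a coordinate sequence is eventually constant it converges; otherwise, using that $\nu$ is regular and that its ``jump points'' are cofinal in $\nu$, one extracts a \emph{strictly} increasing cofinal subsequence of full length $\nu$, which by condition (3) of Theorem \ref{thm} (available because $X_j$ satisfies (a)) has a supremum $s_j$ to which it converges; a convexity argument then shows that the whole weakly increasing sequence converges to $s_j$ as well. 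Since a basic open set of $\prod_{j\in J}X_j$ constrains only finitely many coordinates and $\nu$ is a limit ordinal, coordinatewise convergence upgrades to convergence in the product, giving (d). For (d)$\Rightarrow$(a), take a strictly increasing (resp.\ decreasing) $\nu$-indexed sequence $(y_\gamma)_{\gamma<\nu}$ in some $X_{j}$ with $\nu\in K_D^{Reg}$, and lift it to the sequence in $\prod_{l\in J}X_l$ that equals $y_\gamma$ in the $j$-th coordinate and a fixed base point elsewhere (such a base point exists since all factors are nonempty); this lifted sequence is monotone in the product order, hence converges by (d), and applying $\pi_{j}$ shows $(y_\gamma)_{\gamma<\nu}$ converges in $X_{j}$; the rays $(-\infty,a)$ and $(a,\infty)$ being open then force its limit to be the supremum (resp.\ infimum), so $X_{j}$ satisfies condition (3) of Theorem \ref{thm} and, being a GO space, all of them, which is (a).

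I expect the genuine work to lie in the (a)$\Rightarrow$(d) step, precisely because (d) speaks about the \emph{partial} order on the product rather than a purely topological property that transfers cleanly along the projections. One must handle merely weakly monotone coordinate sequences --- to which condition (3) of Theorem \ref{thm}, stated only for \emph{strictly} monotone sequences, does not apply verbatim --- and check that replacing such a sequence by a strictly monotone cofinal subsequence of the \emph{same} length $\nu$ preserves both the relevant cardinal type and the success or failure of convergence. Everything else is bookkeeping with the productivity of $D'$-compactness and the inheritance of the listed properties along the projections.
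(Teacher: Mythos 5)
Your proposal is correct and follows essentially the same route as the paper: productivity of $D'$-compactness (and $D'$-pseudocompactness) for (a)$\Rightarrow$(b), trivial inheritance to factors for (c)$\Rightarrow$(a), and for (a)$\Leftrightarrow$(d) the same reduction of a weakly monotone coordinate sequence to an eventually constant one or a strictly monotone cofinal subsequence of length $\nu$ (using regularity of $\nu$) to which condition (3) of Theorem \ref{thm} applies. The paper leaves the inheritance along projections and the lifting in (d)$\Rightarrow$(a) as "trivial" where you spell them out, but the substance is identical.
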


 \begin{proof}
(a) $\Rightarrow $    (b) If each 
$X_j$ satisfies any one of the conditions
in Theorem \ref{thm}, then, by the very same theorem,
$X_j$ is $D$-compact.
By an easy and classical property of $D$-compactness
\cite{ber70,GS,sak78}, every product of $D$-compact spaces is still 
$D$-compact, hence 
$ \prod _{j \in J} X_j$ is still $D$-compact.
Now notice that the proof that (1)
 implies any one of the conditions
(1)-(2),(5)-(7) in Theorem \ref{thm}
holds for an arbitrary topological space, not only for GO spaces.
Moreover, if each $X_j$ is $D$-compact, then, by Theorem \ref{thm} (1) $\Rightarrow $ (8),
each $X_j$ is $D'$-compact, for every 
ultrafilter $D'$ such that 
$K _{D'} ^{Reg} \subseteq K_D ^{Reg}$.
Again since $D'$-compactness is preserved under products,
$ \prod _{j \in J} X_j$ is  $D'$-compact, thus condition
\ref{thm} (8) holds for $ \prod _{j \in J} X_j$.
The proof that (9) holds for $ \prod _{j \in J} X_j$
is similar, since $D'$-pseudocompactness is preserved under products,
too, \cite{GS}. 

(b) $\Rightarrow $  (c) is trivial.

(c) $\Rightarrow $  (a) 
If $ \prod _{j \in J} X_j$ satisfies any one of the conditions
(1)-(2),(5)-(9), then trivially
each $X_j$ satisfies the same condition, hence (a) holds.

(a) $\Rightarrow $  (d) By Theorem \ref{thm},
if (a) holds, then condition (3) in Theorem \ref{thm} holds, for every
$j \in J$.
 Suppose that $(\mathbf{x}_ \gamma ) _{ \gamma \in \nu} $ 
is a monotone, say, increasing, sequence in $ \prod _{j \in J} X_j$.
Since $\nu$ is a regular cardinal, 
for every $j \in J$, the projection 
$(x _{j,  \gamma} ) _{ \gamma \in \nu} $
of  $(\mathbf{x}_ \gamma ) _{ \gamma \in \nu} $
into $X_j$ 
is either eventually constant, or 
has a strictly increasing subsequence of order type $\nu$.
In both cases, 
$(x _{j,  \gamma} ) _{ \gamma \in \nu} $ converges in
$X_j$. This is trivial in the former case; 
in the latter case, the strictly increasing subsequence
converges, by condition (3) in Theorem \ref{thm}, and then also
$(x _{j,  \gamma} ) _{ \gamma \in \nu} $ converges (to the same point), since it is increasing.
Now it is trivial to see that a sequence in a product converges if and only if
each projection converges, hence 
$(\mathbf{x}_ \gamma ) _{ \gamma \in \nu}$  converges in 
$ \prod _{j \in J} X_j$.

(d) $\Rightarrow $  (a) If (d) holds, then trivially each $X_j$
satisfies condition (3) in Theorem \ref{thm}, thus (a) holds.  
 \end{proof}

\section{The particular case of initial compactness} \labbel{secinit} 

For LOTS, the equivalence of conditions (2), (4)  and (10) in the next corollary 
has first been proved, under different terminology and with further
equivalences, in \cite[Theorem 3]{GFW}. For GO spaces the corollary 
 appears in \cite{goic}. 

Recall that an ultrafilter $D$ over $\lambda$
is \emph{regular} if  there is a family 
$(Z_ \alpha ) _{ \alpha \in \lambda } $ of members of $D$ such that
the intersection of any infinite subfamily is empty (this is also
called \emph{$( \omega, \lambda )$-regularity}). 
It is a standard application of the Axiom of Choice, 
or just the Prime Ideal Theorem,
to show that, for every infinite cardinal $\lambda$, there exists a regular ultrafilter $D$ 
over $\lambda$. See, e.~g., Chang and Keisler \cite[Proposition 4.3.5]{CK}.
For such an ultrafilter, it is easy to show that $ \nu \in K_D$,
for every regular $\nu \leq \lambda $: see \cite[Properties 1.1(xii)]{mru}  
(in fact, one actually has that
$K_D=[ \omega, \lambda ]$,
by  \cite[Properties 1.1(1) and the last paragraph in Remark 1.5(b)]{mru}, but we shall not need this here).
It follows that $K _{D'} ^{Reg}  \subseteq K_D^{Reg}$, for every regular ultrafilter over $\lambda$,
and every ultrafilter $D'$ over any set of cardinality $\leq \lambda$.

\begin{corollary} \labbel{initial}
For every infinite cardinal $\lambda$, and every GO
space $X$, the following conditions are equivalent.
 \begin{enumerate}
\item
$X$ is $D$-compact, for some regular ultrafilter over $\lambda$.
   \item 
 $X$ is initially $\lambda$-compact.
\item
For every infinite regular cardinal $ \nu \leq \lambda $,
and every strictly increasing (resp., strictly decreasing) $\nu$-indexed sequence
of elements of $X$, the sequence has a supremum (resp., an infimum) to which it
converges.
\item
$X$ has no (pseudo-)gap having type $\leq \lambda$.
\item
$X$ is $D$-pseudocompact, for some regular ultrafilter over $\lambda$.
\item 
 $X$ is weakly initially $\lambda$-compact.
\item
For every infinite (equivalently, every infinite regular) cardinal $ \nu \leq \lambda $,
and every sequence  $(O_ \gamma ) _{\gamma < \nu}$ of open 
 nonempty sets of $X$,   
 there is $x \in X$ such that 
$|\{\gamma < \nu \mid O_ \gamma  \cap U \not= \emptyset \}|= \nu$, 
for every neighborhood $U$ of $x$.   

In the  above condition we can equivalently require either that the $O_ \gamma $'s
are pairwise disjoint, or  that $O_ \gamma \subset O _{ \gamma '} $,
for $\gamma> \gamma '$.  
\item 
$X$ is $D$-compact, for every ultrafilter $D$ over any set of cardinality 
$ \leq\lambda$.
\item 
$X$ is $D$-pseudocompact, for every ultrafilter $D$ over any set of cardinality 
$ \leq\lambda$.
\item
$X$ is \emph{$\lambda$-bounded}, that is, every subset of cardinality
$\leq \lambda$ has compact closure.
  \end{enumerate} 
 \end{corollary}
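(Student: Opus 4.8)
The plan is to derive conditions (1)--(9) from Theorem~\ref{thm} applied to a single, well-chosen ultrafilter, and then to graft on condition~(10) by a separate argument.

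First I would fix a regular ultrafilter $D_0$ over $\lambda$, which exists by the paragraph preceding the statement. The crucial point, also recalled there, is that $K_{D_0}^{Reg}$ is exactly the set of \emph{all} infinite regular cardinals $\nu\le\lambda$: the inclusion $\supseteq$ is the cited property of regular ultrafilters, while $\subseteq$ is immediate, since an ultrafilter over a set of cardinality $\le\lambda$ cannot be $\nu$-decomposable for $\nu>\lambda$. With this identification, conditions (1)--(9) of the corollary become, essentially verbatim, the corresponding conditions of Theorem~\ref{thm} for $D_0$. Indeed, (3) and (7) are literally the same; (4) matches because every type of a (pseudo-)gap is a regular cardinal, so ``type $\le\lambda$'' means exactly ``type in $K_{D_0}^{Reg}$''; (2) matches after invoking the classical Alexandroff--Urysohn equivalence \cite{AU} between initial $\lambda$-compactness and $[\nu,\nu]$-compactness for every regular $\nu\le\lambda$; (6) fits in through the trivial implications (2)$\Rightarrow$(6) (a subcover is a subfamily with dense union) and (6)$\Rightarrow$ Theorem~\ref{thm}(6) for $D_0$ (restrict to covers of size $\le\nu\le\lambda$), the loop being closed by Theorem~\ref{thm}(6)$\Rightarrow$(2). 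Conditions (1) and (5) follow once one notes that all regular ultrafilters over $\lambda$ share the same $K^{Reg}$, so by Theorem~\ref{thm} they yield equivalent notions of $D$-compactness and of $D$-pseudocompactness, and at least one such ultrafilter exists. Finally, (8) and (9) follow from parts (8), (9) of Theorem~\ref{thm} for $D_0$ together with the observation, made before the statement, that $K_{D'}^{Reg}\subseteq K_{D_0}^{Reg}$ for every ultrafilter $D'$ over a set of cardinality $\le\lambda$; the converse implications (8)$\Rightarrow$(1) and (9)$\Rightarrow$(5) come from taking $D=D_0$.

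It then remains to incorporate (10), for which I would prove (10)$\Rightarrow$(2) and (2)$\Rightarrow$(10). For the former, suppose $X$ is $\lambda$-bounded and let $\{U_\alpha\mid\alpha<\lambda\}$ be an open cover with no finite subcover; for each finite $F\subseteq\lambda$ choose $x_F\notin\bigcup_{\alpha\in F}U_\alpha$ and set $Y=\{x_F\mid F\in[\lambda]^{<\omega}\}$, so $|Y|\le\lambda$. Then $\overline Y$ is compact, hence covered by finitely many $U_{\alpha_1},\dots,U_{\alpha_n}$, contradicting $x_{\{\alpha_1,\dots,\alpha_n\}}\in Y\subseteq\overline Y$. (This implication uses neither GO-ness nor the rest of the corollary.)

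The implication (2)$\Rightarrow$(10) is the substantial one, and the step I expect to be the main obstacle. Given $Y\subseteq X$ with $|Y|\le\lambda$, I must show that the GO space $\overline Y$ is compact; since a GO space with no gap and no pseudo-gap is compact (apply the already-established equivalence (2)$\Leftrightarrow$(4) to $\overline Y$ with any parameter $\ge|\overline Y|$), it suffices to prove that $\overline Y$ has no (pseudo-)gap at all. I would do this in two steps. \emph{Step 1}: any (pseudo-)gap $(A',B')$ of $\overline Y$ has type $\le\lambda$. On the endpoint-free side, say $A'$ without a maximum, one checks that $Y\cap A'$ is cofinal in $A'$: given $a\in A'$, pick $a<a'<a''$ in $A'$; then $(a,a'')$ is an open neighbourhood of $a'\in\overline Y$, hence meets $Y$, and any such point lies strictly between $a$ and $a''\in A'$, so it belongs to $Y\cap A'$ and exceeds $a$. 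Thus $\cf(A')\le|Y|\le\lambda$, and symmetrically for the coinitiality of $B'$. \emph{Step 2}: condition (3) (equivalent to (2)) rules out (pseudo-)gaps of type $\le\lambda$. If $(A',B')$ were such a gap with, say, $\cf(A')=\mu\le\lambda$, take a strictly increasing $\mu$-indexed sequence cofinal in $A'$; by (3) it has a supremum $s$ to which it converges, and $s\in\overline Y$ since $\overline Y$ is closed. One then reads off $A'=\overline Y\cap(-\infty,s)$ and $B'=\overline Y\cap[s,\infty)$, so $s=\min B'$ while $s\in\overline{A'}$; this contradicts $B'$ having no minimum if $(A',B')$ is a gap, and contradicts the openness of $B'$ (equivalently, of $A'$) if $(A',B')$ is a pseudo-gap. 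The three remaining configurations — the type arising from the coinitiality of $B'$, and the two flavours of pseudo-gap with the endpoint on the other side — are dispatched by the symmetric argument with infima. The points needing care are the bookkeeping over which side of a pseudo-gap carries the endpoint (and the corresponding openness requirement), and checking that the cofinality estimate of Step~1 is legitimate in each configuration that can actually occur.
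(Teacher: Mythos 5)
Your overall strategy --- fix a regular ultrafilter $D_0$ over $\lambda$, observe that $K_{D_0}^{Reg}$ is exactly the set of infinite regular cardinals $\le\lambda$, and read off conditions (1)--(9) from Theorem \ref{thm} --- is exactly the paper's first proof, and your identifications for (1)--(5), (8), (9) are sound. Your loop for (6), namely (2) $\Rightarrow$ (6) $\Rightarrow$ weak $[\nu,\nu]$-compactness for all regular $\nu\le\lambda$ $\Rightarrow$ Theorem \ref{thm}(2), is valid and in fact more elementary than the paper's route, which goes (5) $\Rightarrow$ (6) via a cited result. Your treatment of (10) is a genuinely different, self-contained argument: (10) $\Rightarrow$ (2) is the standard boundedness argument, and your (2) $\Rightarrow$ (10) (cofinality of gaps of $\overline Y$ is $\le|Y|$ because $Y\cap A'$ is cofinal in $A'$, then condition (3) kills those gaps) is correct and close in spirit to the paper's ``alternative proof'' of (4) $\Rightarrow$ (10), which instead bounds the order types of subsets of $\overline Y$ and invokes Nagata's compactness criterion; the paper's primary proof simply cites Saks for (8) $\Leftrightarrow$ (10) in regular Hausdorff spaces. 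Both of your choices here are legitimate and arguably buy self-containedness.

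There is, however, one genuine gap: condition (7) is \emph{not} ``literally the same'' as Theorem \ref{thm}(7). The corollary's (7) is asserted for \emph{every} infinite cardinal $\nu\le\lambda$, including singular ones, together with the claim that this is equivalent to the restriction to regular $\nu$; Theorem \ref{thm}(7) only gives the regular case. The passage from the regular instances to the singular ones is not formal --- the analogous implication for complete accumulation points already requires an argument even for open covers, and the paper handles precisely this step by citing \cite[Theorem 1]{mpcap} and Retta \cite[Theorem 3(d)]{ret93} (note also the paper's warning that weak $[\nu,\nu]$-compactness for all regular $\nu\le\lambda$ does \emph{not} in general imply weak initial $\lambda$-compactness, so one cannot be cavalier in this neighbourhood). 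Your proof as written establishes only (7)$_{\mathrm{reg}}$ and never returns to the singular case. The gap is fixable --- e.g.\ from your condition (10): given $(O_\gamma)_{\gamma<\nu}$ with $\nu\le\lambda$ singular, pick $y_\gamma\in O_\gamma$, let $D$ be a uniform ultrafilter over $\nu$, and use compactness of $\overline{\{y_\gamma\mid\gamma<\nu\}}$ to get a $D$-limit $x$; uniformity of $D$ then gives $|\{\gamma\mid O_\gamma\cap U\ne\emptyset\}|=\nu$ for every neighbourhood $U$ of $x$ --- but some such argument must be supplied.
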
 

\begin{proof}
We are first giving the proof as a consequence 
of Theorem \ref{thm}. As we mentioned in the introduction,
a more direct proof (which nevertheless uses essentially the same ideas)
can be found in  \cite{goic}.
We then sketch an alternative proof which goes along with some arguments in
\cite{GFW}.

Most of the equivalences in the corollary 
are the particular cases of Theorem \ref{thm} applied when
$D$
is a regular ultrafilter over $\lambda$.
This is the case for conditions (1), (3)-(5), (7)$_{\rm reg}$,
(8) and (9),
by the remarks before the statement of the corollary,
and where by (7)$_{\rm reg}$ we denote condition (7)
restricted to regular $\nu$'s.

As far as (2) is concerned, 
it is standard and already proved in \cite{AU}
that initial $\lambda$-compactness is equivalent
to $[\nu,\nu]$-compactness for every regular $\nu \leq \lambda $,
so we get the equivalence with condition (2) in Theorem \ref{thm},
in the case when $D$ is regular over $\lambda$.
Alternatively, (1) $\Rightarrow $  (2) in the present corollary 
follows immediately from \cite[Theorem 3.4]{Ca}.
Notice that \cite{Ca}, following standard use
in the model theoretical setting, uses a notation 
in which the order of the cardinals is reversed, 
both in the definition of $[\mu,\nu]$-compactness
and of $( \omega , \nu)$-regularity. 

In contrast with (2), it is not always necessarily the
case that (for  topological spaces in general) weak $[\nu,\nu]$-compactness for every regular $\nu \leq \lambda $
implies weak initial $\lambda$-compactness. This is
Remark 30 in \cite{tapp2}, 
relying on an example by Garc{\'{\i}}a-Ferreira 
\cite{gf99}, which in turn builds  on a construction by 
Kanamori \cite{kan86}.
However, 
(5) implies (6) by \cite[Corollary 15]{tapp2},
(6) implies  (7)  by 
\cite[Theorem 1]{mpcap} and Retta \cite[Theorem 3(d)]{ret93},
and (7) implies (7)$_{\rm reg}$ trivially,
hence, for GO spaces, they are all equivalent, since  we have already
proved that for GO spaces (5) and (7)$_{\rm reg}$ are equivalent.

Finally, the equivalence of (8) and (10)  holds for every Hausdorff regular space, \cite[Theorems 5.3 and 5.4]{sak78} (recall that it can be proved that
every GO space is regular). Our first proof of the corollary is thus complete.

\smallskip

An alternative proof of (4) implies (10)
 goes as follows. Suppose that $Y \subseteq X$, and  
$|Y| \leq \lambda $.  The closure $ \overline{Y} $ of $Y$ has no (pseudo-)gap
of type $\leq \lambda$, since otherwise it would extend to a (pseudo-)gap of $X$
having the same type. Moreover, $ \overline{Y} $  has no
subset of order type or reversed order type $>\lambda$,
since then one could construct a subset of $Y$ having the same order type,
but this is impossible, since $|Y| \leq \lambda $. In conclusion,
$ \overline{Y} $ has no (pseudo-)gap at all,
but this implies that it is compact, by a well-known result,
e.~g., Nagata \cite[Theorem VIII.2]{nag85}. 

Then an alternative proof of the corollary is obtained by the following 
chains of implications
(4) $\Rightarrow $   (10) $\Rightarrow $  (8) $\Rightarrow $  (1)
$\Rightarrow $  (2) $\Rightarrow $  (3) $\Rightarrow $  (4)
and (4) $\Rightarrow $   (10) $\Rightarrow $ (8) $\Rightarrow $   (9) $\Rightarrow $  (5)
$\Rightarrow $  (6) $\Rightarrow $  (7) $\Rightarrow $  (4),
which are either trivial, or proved before.
\end{proof}

\begin{corollary} \labbel{cor}
Suppose that $X$ is a product of topological spaces and that all
 factors but at most one are GO spaces.
Then the following hold.
  \begin{enumerate}    \item  
$X$ is  initially $\lambda$-compact
if and only if each factor is initially $\lambda$-compact.
\item
$X$ is weakly initially $\lambda$-compact
if and only if each factor is weakly initially $\lambda$-compact.
 \end{enumerate}
 \end{corollary}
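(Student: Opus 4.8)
The plan is to avoid ultrafilters and deduce both parts from a single ``absorption'' principle; throughout I treat~(1) and~(2) in parallel, writing the weak variant in parentheses. Two preliminary facts carry most of the load. First, $\lambda$-boundedness is preserved by arbitrary products: if each $Y_j$ is $\lambda$-bounded and $A\subseteq\prod_j Y_j$ has $|A|\le\lambda$, then each $\pi_j(A)$ has size $\le\lambda$, so $\overline{\pi_j(A)}$ is compact, and $\overline A$ is a closed subspace of $\prod_j\overline{\pi_j(A)}$, which is compact by Tychonoff's theorem. Second, by Corollary~\ref{initial}, for a GO space the properties ``initially $\lambda$-compact'', ``weakly initially $\lambda$-compact'' and ``$\lambda$-bounded'' all coincide. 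The ``only if'' directions of~(1) and~(2) are then disposed of at once: each projection $\pi_j\colon X\to X_j$ is a continuous surjection (all factors being nonempty), and both initial and weak initial $\lambda$-compactness are inherited by continuous surjective images — for the weak case via $f(\overline E)\subseteq\overline{f(E)}$ and $\pi_j(\pi_j^{-1}(A))=A$.

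For the ``if'' directions, write $X\cong Y\times Z$, where $Y$ is the product of the factors that are GO spaces and $Z$ is the remaining (possibly non-GO) factor — with $Z$ a one-point space in case every factor is a GO space. If every factor is initially $\lambda$-compact (resp.\ weakly initially $\lambda$-compact), then by the two facts above every GO factor is $\lambda$-bounded, hence $Y$ is $\lambda$-bounded, while $Z$ is initially $\lambda$-compact (resp.\ weakly initially $\lambda$-compact). So both parts follow from the absorption principle: \emph{if $Y$ is $\lambda$-bounded and $Z$ is initially $\lambda$-compact (resp.\ weakly initially $\lambda$-compact), then $Y\times Z$ is initially $\lambda$-compact (resp.\ weakly initially $\lambda$-compact)}.

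I would prove the absorption principle in two steps. \textbf{Step 1} is the compact case: if $C$ is compact and $Z$ is initially (resp.\ weakly initially) $\lambda$-compact, then so is $C\times Z$. Given an open cover $\mathcal A$ of $C\times Z$ with $|\mathcal A|\le\lambda$, for each finite $G\subseteq\mathcal A$ the tube $W_G=\{z\in Z\mid C\times\{z\}\subseteq\bigcup G\}$ is open in $Z$ by the tube lemma, and $\{W_G\mid G\in[\mathcal A]^{<\omega}\}$ is an open cover of $Z$ (each slice $C\times\{z\}$ being covered by finitely many members of $\mathcal A$, by compactness of $C$) of cardinality $\le\lambda$; applying the covering property of $Z$ to it and pulling back through the $W_G$'s produces finitely many members of $\mathcal A$ that cover $C\times Z$ (resp.\ whose union is dense in $C\times Z$). \textbf{Step 2} replaces $C$ compact by $Y$ merely $\lambda$-bounded, and here is exactly where $\lambda$-boundedness is what is needed. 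Suppose $\mathcal A=\{A_\alpha\mid\alpha<\lambda\}$ is an open cover of $Y\times Z$ with no finite subcover (resp.\ no finite subfamily with dense union). Enumerate $[\lambda]^{<\omega}$ as $\{F_\beta\mid\beta<\lambda\}$; at stage $\beta$ pick a point $(y_\beta,z_\beta)\in(Y\times Z)\setminus\bigcup_{\alpha\in F_\beta}A_\alpha$ (resp.\ a nonempty basic open box $V_\beta\times U_\beta$ disjoint from $\bigcup_{\alpha\in F_\beta}A_\alpha$, together with some $y_\beta\in V_\beta$). Then $C:=\overline{\{y_\beta\mid\beta<\lambda\}}$ is compact since $Y$ is $\lambda$-bounded, so Step~1, applied to the open cover $\{A_\alpha\cap(C\times Z)\mid\alpha<\lambda\}$ of $C\times Z$, gives a finite $F=F_\beta$ with $C\times Z\subseteq\bigcup_{\alpha\in F_\beta}A_\alpha$ (resp.\ with $\bigcup_{\alpha\in F_\beta}(A_\alpha\cap(C\times Z))$ dense in $C\times Z$). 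Since $y_\beta\in C$, in the first case $(y_\beta,z_\beta)$ lies in $\bigcup_{\alpha\in F_\beta}A_\alpha$, and in the second case the nonempty open subset $(V_\beta\cap C)\times U_\beta$ of $C\times Z$ meets $\bigcup_{\alpha\in F_\beta}A_\alpha$; either way this contradicts the choice made at stage~$\beta$.

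The only delicate point is the bookkeeping in Step~1 — above all, checking that finitely many tubes $W_{G_i}$ with dense union in $Z$ yield finitely many members of $\mathcal A$ with dense union in $C\times Z$ — together with the remark in Step~2 that the closure of the $\lambda$ chosen witnesses $y_\beta$ is compact and still contains $y_\beta\in V_\beta$; everything else is routine and ultrafilter-free. As an aside, part~(1) also admits a quick ultrafilter proof: each GO factor, hence $Y$, is $D$-compact for every ultrafilter $D$ on a set of size $\le\lambda$ by Corollary~\ref{initial}, and if $Z$ is initially $\lambda$-compact then any regular $\nu$-indexed sequence with $\nu\le\lambda$ in $Z$ converges along some uniform ultrafilter on $\nu$, which $D$-compactness of $Y$ propagates to a complete accumulation point in $Y\times Z$ — but this route does not obviously cover the weak property~(2).
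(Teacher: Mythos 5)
Your proof is correct, and it diverges from the paper's in an interesting way in part (2). For part (1) the two arguments are essentially the same: both decompose the product into (product of GO factors) $\times$ (remaining factor), observe via Corollary \ref{initial} that each GO factor is $\lambda$-bounded, and then use that $\lambda$-boundedness is productive and that a $\lambda$-bounded space times an initially $\lambda$-compact space is initially $\lambda$-compact; the paper cites Gulden--Fleischman--Weston and Stephenson for these two facts, whereas you prove them directly (your Tychonoff argument and your tube-lemma-plus-diagonalization argument are both sound). The genuine difference is in part (2). There the paper switches from $\lambda$-boundedness to the property ``$D$-pseudocompact for every ultrafilter $D$ over a set of cardinality $\lambda$'' (conditions (6) $\Leftrightarrow$ (9) of Corollary \ref{initial}), uses productivity of $D$-pseudocompactness, and then invokes Lemma \ref{lmlm}, whose proof rests on an ultrafilter characterization of weak initial $\lambda$-compactness via covers indexed by $S_\omega(\lambda)$. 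You instead prove, entirely without ultrafilters, the parallel absorption statement that a $\lambda$-bounded space times a weakly initially $\lambda$-compact space is weakly initially $\lambda$-compact; I checked the two points you flag as delicate (finitely many tubes $W_{G_i}$ with dense union in $Z$ do give finitely many members of $\mathcal A$ whose union is dense in $C\times Z$, and the box $(V_\beta\cap C)\times U_\beta$ is nonempty because $y_\beta\in V_\beta\cap C$), and they go through. In the GO setting the two hypotheses on the ``large'' factor are interchangeable by Corollary \ref{initial}, so both routes prove the corollary; the paper's Lemma \ref{lmlm} is the stronger standalone statement, since for general topological spaces $D$-pseudocompactness for all $D$ over $\lambda$ is weaker than $\lambda$-boundedness, while your version has the virtue of being elementary and keeping the whole corollary ultrafilter-free.
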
 

\begin{proof}
(1) An implication is trivial.
For the other direction, by the equivalence of (2) and (10) in Corollary \ref{initial}, 
all but at most one factor are $\lambda$-bounded.
A product of regular $\lambda$-bounded spaces
is still $\lambda$-bounded 
\cite[Lemma 4]{GFW}, 
or \cite[Theorem 5.7 and implications (1), (1$'$) in Diagram 3.6]{ste84},
and  a product of a $\lambda$-bounded space with an 
  initially $\lambda$-compact space is  initially $\lambda$-compact
\cite[Theorem 5.2 and implications (1), (2) in Diagram 3.6]{ste84}. Hence 
 (1) follows by first grouping together
the GO spaces, and then, in case,  multiplying their product
with the possibly non GO factor.
Alternatively, one can use an argument parallel to the one 
we are going to give for (2).

(2)  Again, an implication is trivial.
For the other direction, by the equivalence of (6) and (9) in Corollary \ref{initial}, 
all but at most one factor are $D$-pseudocompact, for every ultrafilter
$D$ over any set of cardinality $\lambda$.
Since  $D$-pseudocompactness is preserved under products
\cite{GS},
we have that all but at most one factor are
$D$-pseudocompact, for every ultrafilter
$D$ over any set of cardinality $\lambda$.
Since the (possible) remaining factor is
weakly initially $\lambda$-compact, by assumption,
(2) follows from 
 the
next lemma (which, in some form or another, is probably folklore).
 \end{proof}

\begin{lemma} \labbel{lmlm} 
If the topological space $X$ is  $D$-pseudocompact, for every ultrafilter $D$ 
over $\lambda$,
and the topological space $Y$ is weakly initially $\lambda$-compact, then 
$X \times Y$ is weakly initially $\lambda$-compact.
\end{lemma}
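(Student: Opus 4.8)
The plan is to argue by contradiction, starting from a witnessing bad cover of $X\times Y$ and manufacturing a point that lies in no member of it. So suppose $X\times Y$ is not weakly initially $\lambda$-compact, and fix an open cover $\mathcal W$ of $X\times Y$ with $|\mathcal W|\le\lambda$ having no finite subfamily with dense union. Such a $\mathcal W$ is infinite (a finite cover would itself be a finite subfamily with dense union), so I may write $\mathcal W=\{W_\alpha:\alpha<\lambda\}$, repeating members if $|\mathcal W|<\lambda$. For each finite $F\subseteq\lambda$ the set $\bigcup_{\alpha\in F}W_\alpha$ is not dense, so its closure omits a nonempty basic open box; I choose such a box $A_F\times B_F$ (with $A_F$ open nonempty in $X$ and $B_F$ open nonempty in $Y$) disjoint from $\overline{\bigcup_{\alpha\in F}W_\alpha}$, so in particular $(A_F\times B_F)\cap W_\alpha=\emptyset$ for every $\alpha\in F$. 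Since $|[\lambda]^{<\omega}|=\lambda$, the families $(A_F)_{F\in[\lambda]^{<\omega}}$ and $(B_F)_{F\in[\lambda]^{<\omega}}$ are families of at most $\lambda$ nonempty open subsets of $X$ and of $Y$, respectively.

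The first, and key, step is to locate a point $y\in Y$ that is a cluster point of $(B_F)_F$ \emph{along the directed set $[\lambda]^{<\omega}$}: for every neighbourhood $N$ of $y$ and every $F_0\in[\lambda]^{<\omega}$ there is $F\supseteq F_0$ with $B_F\cap N\neq\emptyset$. If no such $y$ existed, then for each $y\in Y$ I could pick an open neighbourhood $N_y$ and a set $F_y\in[\lambda]^{<\omega}$ with $N_y\cap\bigcup_{F\supseteq F_y}B_F=\emptyset$; since $N_y$ is open this gives $N_y\subseteq O_{F_y}$, where $O_{F_0}:=Y\setminus\overline{\bigcup_{F\supseteq F_0}B_F}$ is open. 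Thus $\{O_{F_0}:F_0\in[\lambda]^{<\omega}\}$ is an open cover of $Y$ of cardinality $\le\lambda$, so by weak initial $\lambda$-compactness of $Y$ there is a finite $\mathcal F\subseteq[\lambda]^{<\omega}$ with $\bigcup_{F_0\in\mathcal F}O_{F_0}$ dense in $Y$. Putting $F^\ast=\bigcup\mathcal F$, the nonempty open set $B_{F^\ast}$ must meet $O_{F_0}$ for some $F_0\in\mathcal F$; but $F^\ast\supseteq F_0$ forces $B_{F^\ast}\subseteq\overline{\bigcup_{F\supseteq F_0}B_F}$, hence $B_{F^\ast}\cap O_{F_0}=\emptyset$ — a contradiction, establishing the claim.

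Next I would use this $y$ to build the right ultrafilter. The sets $\{F:B_F\cap N\neq\emptyset\}$ (over neighbourhoods $N$ of $y$) together with the tails $\{F:F\supseteq F_0\}$ (over $F_0\in[\lambda]^{<\omega}$) have the finite intersection property — this is precisely the cluster-point property of $y$, applied to $N=N_1\cap\dots\cap N_k$ and $F_0=F_0^1\cup\dots\cup F_0^m$ — so they extend to an ultrafilter $D$ over $[\lambda]^{<\omega}$, which via a bijection $[\lambda]^{<\omega}\leftrightarrow\lambda$ I regard as an ultrafilter over $\lambda$. Here the hypothesis on $X$ enters with its full force: $X$ is $D$-pseudocompact for \emph{this particular} $D$, so the family $(A_F)_F$ has a $D$-limit point $x\in X$, i.e.\ $\{F:A_F\cap M\neq\emptyset\}\in D$ for every neighbourhood $M$ of $x$. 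Finally I close the loop: pick $\alpha_0$ with $(x,y)\in W_{\alpha_0}$ and a basic box $M\times N\subseteq W_{\alpha_0}$ around $(x,y)$; then $\{F:A_F\cap M\neq\emptyset\}$, $\{F:B_F\cap N\neq\emptyset\}$ and $\{F:\alpha_0\in F\}$ all lie in $D$, so their intersection is nonempty, and any $F$ in it satisfies $(A_F\times B_F)\cap(M\times N)\neq\emptyset$, contradicting $M\times N\subseteq W_{\alpha_0}$ and $(A_F\times B_F)\cap W_{\alpha_0}=\emptyset$ (the latter because $\alpha_0\in F$).

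The main obstacle — really the only nontrivial idea — is the first step: converting the hypothetical absence of a directed cluster point for $(B_F)_F$ into a genuine open cover of $Y$ of size $\le\lambda$, so that weak initial $\lambda$-compactness of $Y$ (which a priori controls only covers of size $\le\lambda$, not arbitrary families of open sets) can be invoked. The remaining ingredients are routine: the cardinal arithmetic $|[\lambda]^{<\omega}|=\lambda$, the finite-intersection-property verification, and the observation that $D$-pseudocompactness of $X$ is guaranteed for \emph{every} ultrafilter on $\lambda$, hence for the one manufactured in the proof.
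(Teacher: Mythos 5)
Your proof is correct. It follows the same underlying strategy as the paper's, but where the paper outsources the key step to a citation, you prove it from scratch. The paper invokes \cite[Theorem 10]{tapp2}: a space is weakly initially $\lambda$-compact iff every family of nonempty open sets indexed by $S_\omega(\lambda)$ admits an ultrafilter $D$ over $S_\omega(\lambda)$ containing every tail $\{Z \mid \alpha \in Z\}$ and along which the family has a $D$-limit point; it then applies this to $Y$ to get $D$ and $y$, uses $D$-pseudocompactness of $X$ (noting $|S_\omega(\lambda)|=\lambda$) to get $x$, and concludes via the same characterization for $X\times Y$. Your first step --- producing a cluster point of $(B_F)_F$ along the directed set $[\lambda]^{<\omega}$ by turning its hypothetical absence into an open cover $\{O_{F_0}\}$ of size $\le\lambda$ and then extending the neighbourhood traces together with the tails to an ultrafilter --- is precisely a proof of the nontrivial direction of that cited theorem in the instance you need; your closing contradiction, using that the tail $\{F \mid \alpha_0\in F\}$ lies in $D$, reproves the easy direction in contrapositive form. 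What your version buys is self-containedness (no appeal to \cite{tapp2}) at the cost of length and a detour through contradiction; what the paper's version buys is a three-line argument, plus a reusable characterization. All the details I checked --- the FIP verification, the identification of $D$ with an ultrafilter over $\lambda$ via $|[\lambda]^{<\omega}|=\lambda$, the shrinking of non-dense complements to basic boxes $A_F\times B_F$ --- are sound.
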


 \begin{proof}
Let $S_ \omega ( \lambda )$ denote the set of all finite subsets of $\lambda$.
By \cite[Theorem 10]{tapp2}, a topological space is 
weakly initially $\lambda$-compact if and only if,
 for every sequence $\{ O_Z \mid Z \in S_ \omega ( \lambda ) \}$
of nonempty open sets,
 there exists an ultrafilter D over $S_ \omega ( \lambda ) $
such that  both
$\{ Z \in  S_ \omega ( \lambda ) | \alpha \in Z\} \in D$,
for every $\alpha \in \lambda $, and
$\{ O_Z \mid Z \in S_ \omega ( \lambda ) \}$ has a $D$-limit point.
 
 So let $\{ O_Z \mid Z \in S_ \omega ( \lambda ) \}$
be a sequence of nonempty open sets in $X \times Y$: 
it is enough to find some $D$ 
as above. 
Without loss of generality, 
we can suppose that  $O_Z= U_Z \times V_Z$, for every $Z \in S_ \omega ( \lambda )$. Since $Y$ is  
weakly initially $\lambda$-compact, by the quoted theorem,
 there exists an ultrafilter D as above
such that  
$\{ V_Z \mid Z \in S_ \omega ( \lambda ) \}$ has a $D$-limit point $y$.
Since $| S_ \omega ( \lambda )|= \lambda $,
then  $X$ is  $D$-pseudocompact, hence also
 $\{ U_Z \mid Z \in S_ \omega ( \lambda ) \}$ has a $D$-limit point $x$.
But then $(x,y)$ is a $D$-limit point of  
$\{ O_Z \mid Z \in S_ \omega ( \lambda ) \}$.
 \end{proof}

See \cite[Corollary on p. 203]{GFW} 
for other results related to Corollary \ref{cor}. 
See \cite{GFS}
for further results in the particular case of countable compactness, that is,
 $\lambda= \omega $.

\section{The impact of set theory} \labbel{setth} 

In view of Theorem \ref{thm} (2) $\Leftrightarrow $  (4), 
one would be tempted to conjecture that,
for every infinite regular cardinal $\nu$, a GO space 
$X$ is $[ \nu, \nu]$-compact
if and only if $X$ ha no (pseudo-)gap of type 
$\nu$. However this is false:
just consider $\mathbb{R}$ with the usual order, 
and with the discrete topology.
The counterexample can be easily
turned into a LOTS: let $X$ be the product $\mathbb{R} \times \mathbb{Z} $,
with the lexicographic order. 
The induced topology on $X$  
is the discrete one and, since
$| X| = 2 ^{ \omega } $,
then $X$ is not $[\nu, \nu]$-compact,
for every $\nu \leq   2 ^{ \omega } $.
On the other hand, 
$X$ has only gaps of type $ \omega$, thus, 
taking $\nu= \omega _1$ 
(or any $\nu$ with $ \omega <\nu \leq 2 ^{ \omega } $---of course,
this is significant only if the Continuum Hypothesis fails),
we get that having no gap of type $\nu$ does not 
necessarily imply
$[\nu, \nu]$-compactness.
Indeed, the equivalence holds
only for a very special class of cardinals.
It can be shown that a regular cardinal 
 $\nu > \omega $ is  \emph{weakly compact}
 if and only if, for every GO space 
(equivalently, LOTS)
$X$,   
$[ \nu, \nu]$-compactness of $X$ is equivalent 
to  $X$ having  no (pseudo-)gap of type 
$\nu$.

Put in another way, an equivalence like 
 (2) $\Leftrightarrow $  (4) in Theorem \ref{thm}
can hold only if considered \emph{simultaneously}
for all $\nu $ in some appropriate class $H$ of regular cardinals.
Theorem \ref{thm} shows that the equivalence holds 
 in case $H$ can be realized as $  K_D ^{Reg} $,
for some ultrafilter $D$.
In this sense, the existence of a regular ultrafilter 
over $\lambda$ can be seen as a fact 
 ``responsible behind the scene'' for the 
equivalence of initial $\lambda$-compactness 
with the nonexistence of (pseudo-)gaps of type $\leq\lambda$.
As we mentioned, we have a proof
that if $H= \{ \nu \} $ is a singleton,
then the equivalence \ref{thm}
(2) $\Leftrightarrow $  (4) (with $H$ in place of $  K_D ^{Reg} $ there)
holds if and only if either $\nu= \omega $,
or $\nu$ is weakly compact.

The above considerations suggest the following  definition.
If $H$ a class of infinite cardinals,
let us say that a topological space is \emph{$H$-compact}
if it is $[\nu, \nu]$-compact, for every $\nu \in H$.  
Thus the equivalence (2) $\Leftrightarrow $  (4) in Theorem \ref{thm}
asserts that if $H=K_D ^{Reg}$, for some ultrafilter $D$,
then a GO space $X$ is $H$-compact if and only if 
$X$ has no (pseudo-)gap of type in $H$.
If we strengthen  the above conditions
by asking forms of preservation under products,
we get that the corresponding equivalence holds \emph{exactly} for those classes
$H$ which can be expressed as unions of classes of the
form $K_D ^{Reg}$. 
This is the content of  the next corollary, which also shows that 
in many cases, say, when
$H$ is finite, the equivalence holds if and only if 
$H=K_D ^{Reg}$, for some ultrafilter $D$.
Thus, for GO spaces,  preservation of $H$-compactness under products
is highly dependent on set theory, as we shall discuss shortly after the 
proof of the corollary.

Recall that an ultrafilter $D$ is \emph{$( \lambda ,\mu ) $-regular} 
if there is a set of $\mu $ members of $D$ such that the intersection of 
any $\lambda$  of them is empty. We refer again to \cite{mru} for more information about
regularity of ultrafilters. Here regularity shall be used in connection
with results from \cite{Ca}, asserting that, roughly,  $( \lambda ,\mu ) $-regular
ultrafilters are standard witnesses for $[ \lambda , \mu]$-compactness
of products of topological spaces.

Let $Reg$ denote the class of all infinite regular cardinals. 
An \emph{interval of regular cardinals} is a set of the form
$Reg \cap [ \lambda, \mu]$, for certain cardinals $\lambda$ and $\mu $.
Notice that we allow $\mu $ to be singular.
If $H$ is a set of cardinals, and $F$ is a finite union of   intervals of regular cardinals, 
we say that \emph{$H$ includes the cofinalities of the extremes of  $F$} if $F$ can 
be represented as
$F = Reg \cap  \bigcup _{p \in P} [ \lambda _p, \mu _p] $, with $P$ finite,
and in such a way that $ \cf \mu_p \in H$, for every $p \in P$
(of course, in case $H=F$ this is relevant only when some $\mu _p$ is singular).

\begin{corollary} \labbel{gocorbb}
Suppose that $H$ is a class of infinite regular cardinals. Then the following statements are equivalent. 
\begin{enumerate}
   \item 
Every  product of a family of $H$-compact GO spaces is still $H$-compact.
\item
Same as (1), restricted to spaces  which are  regular cardinals
with the order topology.
\item
For every $\nu \in H$ there is an ultrafilter $D_\nu$ 
such that $\nu \in K _{D_\nu} $ and $K _{D_\nu} ^{Reg} \subseteq H$. 
\item[(3$'$)]
There is a class $\mathcal D$ of ultrafilters such that
$H= \bigcup _{D \in \mathcal D} K _{D} ^{Reg} $. 
\item
Every product of GO spaces without (pseudo-)gaps of type in $H$ is
$H$-compact.
\item
More generally, if  $Y$ is
a product of GO spaces,
and each factor of $Y$ satisfies
at least one of conditions (2)-(4), (6)-(7)
in Theorem \ref{thm}, with $K_D^{Reg}$ there replaced 
by $H$, then $Y$  is
$H$-compact.
\item[(5$'$)]
If  $Y$ is
a product of GO spaces,
and each factor of $Y$ satisfies
all the conditions (2)-(4), (6)-(7)
in Theorem \ref{thm}, with $K_D^{Reg}$ there replaced 
by $H$, then $Y$  is
$H$-compact.
\item
For every finite $F \subseteq  H$, there is an ultrafilter $D_F$ 
such that $F \subseteq K _{D_F} ^{Reg} \subseteq H$. 
\item
More generally, for every 
set $F \subseteq H$ such that $F$ is a finite union of
intervals of regular cardinals, and $H$ includes the cofinalities of the extremes of $F$,
 there is an ultrafilter $D_F$ 
such that $F \subseteq K _{D_F} ^{Reg}\subseteq H$. 
\end{enumerate}
Suppose in addition that  $H$ is finite, or,
more generally, that
$H$ is a finite union of
intervals of regular cardinals and $H$ includes the cofinalities of its extremes.
 Then the preceding conditions are also equivalent to the following one.
  \begin{enumerate}
    \item[(8)]   
There is an ultrafilter $D$ such that  $K_D ^{Reg}= H$.  
  \end{enumerate}
 \end{corollary}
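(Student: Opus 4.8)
The plan is to close a single cycle of implications, routing $(3')\Leftrightarrow(3)\Rightarrow(5)\Rightarrow\{(4),(5')\}\Rightarrow(1)\Rightarrow(2)\Rightarrow(3)$ together with $(7)\Rightarrow(6)\Rightarrow(3)\Rightarrow(7)$, and finally $(7)\Leftrightarrow(8)$ under the extra hypothesis. All but two arrows are routine consequences of Theorem \ref{thm}, of the productivity of $D$-compactness \cite{ber70,GS,sak78}, and of elementary facts about decomposability spectra; the two exceptions are $(2)\Rightarrow(3)$ and $(3)\Rightarrow(7)$.

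I would first dispose of the easy arrows. $(3)\Leftrightarrow(3')$: from $(3)$ take $\mathcal D=\{D_\nu:\nu\in H\}$, whence $\bigcup_{D\in\mathcal D}K_D^{Reg}=H$ because each $K_{D_\nu}^{Reg}\subseteq H$ and $\nu\in K_{D_\nu}^{Reg}$; conversely $(3')$ supplies for every $\nu\in H$ some $D\in\mathcal D$ with $\nu\in K_D^{Reg}\subseteq H$. $(3)\Rightarrow(5)$: given a product $Y$ of GO spaces each satisfying one of conditions (2)--(4),(6)--(7) of Theorem \ref{thm} with $H$ in place of $K_D^{Reg}$, fix $\nu\in H$; since $K_{D_\nu}^{Reg}\subseteq H$ each factor a fortiori satisfies the relevant condition with $K_{D_\nu}^{Reg}$, hence is $D_\nu$-compact by Theorem \ref{thm}; $D_\nu$-compactness is productive, so $Y$ is $D_\nu$-compact, hence $[\nu,\nu]$-compact because $\nu\in K_{D_\nu}^{Reg}$; letting $\nu$ vary, $Y$ is $H$-compact. $(5)\Rightarrow(4)$ because ``no (pseudo-)gap of type in $H$'' is condition (4) of Theorem \ref{thm} with $H$, an admissible factor condition, and $(5)\Rightarrow(5')$ trivially. $(4)\Rightarrow(1)$ and $(5')\Rightarrow(1)$: an $H$-compact GO space satisfies all of conditions (2)--(4),(6)--(7) of Theorem \ref{thm} with $H$ — it satisfies (2) by definition, whence (6) and (7), and it has no (pseudo-)gap of type $\nu$ for every $\nu\in H$ since the implications (2) $\Rightarrow$ (3) and (3) $\Leftrightarrow$ (4) in Theorem \ref{thm} hold cardinal by cardinal, whence (3) and (4) — so $\prod X_j$ is $H$-compact by (4), respectively (5'). $(1)\Rightarrow(2)$ is trivial. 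On the combinatorial side, $(7)\Rightarrow(6)$ since any finite $F\subseteq H$ is a finite union of intervals of regular cardinals (the singletons $\{\nu\}=Reg\cap[\nu,\nu]$, $\nu\in F$) and $H$ trivially contains the cofinalities $\nu$ of their extremes; $(6)\Rightarrow(3)$ by taking $F=\{\nu\}$. Under the extra hypothesis $(8)\Rightarrow(7)$ is trivial, and $(7)\Rightarrow(8)$ follows by applying $(7)$ with $F=H$, which is then an admissible choice, to get $D_H$ with $H\subseteq K_{D_H}^{Reg}\subseteq H$.

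The topological heart is $(2)\Rightarrow(3)$. Fix $\nu\in H$; we seek $D$ with $\nu\in K_D^{Reg}\subseteq H$. Note that a regular cardinal $\kappa$ with its order topology is a LOTS whose only (pseudo-)gap is the end gap, of type $\cf\kappa=\kappa$, hence it is $H$-compact exactly when $\kappa\notin H$. If there is no regular $\kappa<\nu$ outside $H$, then $Reg\cap[\omega,\nu]\subseteq H$ and any uniform ultrafilter on $\nu$ already witnesses $(3)$ at $\nu$; otherwise form the product $P=\prod_{(\kappa,f)}\kappa$, the index running over all pairs $(\kappa,f)$ with $\kappa\in Reg\setminus H$, $\kappa<\nu$, and $f\colon\nu\to\kappa$ (a set of pairs). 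Each factor is $H$-compact, so $(2)$ makes $P$ $H$-compact, hence $[\nu,\nu]$-compact, i.e.\ $P$ has $\CAP_\nu$. The ``diagonal'' set $\{\mathbf x_\alpha:\alpha<\nu\}$, where the $(\kappa,f)$-coordinate of $\mathbf x_\alpha$ is $f(\alpha)$, has cardinality $\nu$ (characteristic functions of singletons separate the $\alpha$'s), so it has a complete accumulation point $\mathbf y$; the traces $\{\alpha<\nu:\mathbf x_\alpha\in U\}$ of the neighbourhoods $U$ of $\mathbf y$ form a filter base of $\nu$-sized subsets of $\nu$ which, together with the co-bounded filter on $\nu$, extends to a uniform ultrafilter $D$ on $\nu$; and then every projection $(f(\alpha))_{\alpha<\nu}$ $D$-converges in $\kappa$. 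By Proposition \ref{single}(9) the associated pair $(A,B)$ is not the end gap $(\kappa,\emptyset)$, so the pushforward of $D$ under $f$ concentrates on a proper initial segment of $\kappa$; equivalently, $f$ fails to witness $\kappa$-decomposability of $D$. Since $(\kappa,f)$ ranges over all admissible pairs, $D$ is not $\kappa$-decomposable for any $\kappa\in Reg\setminus H$ below $\nu$, and not for any $\kappa\ge\nu$ either since $D$ lives on $\nu$; hence $K_D^{Reg}\subseteq H$, while $\nu\in K_D^{Reg}$ because $D$ is uniform on $\nu$. This is $(3)$ at $\nu$.

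Finally, $(3)\Rightarrow(7)$ is the step I expect to be the main obstacle. If $F$ is finite, $D_F:=D_{\nu_1}\otimes\cdots\otimes D_{\nu_k}$ works, since $K^{Reg}_{D_{\nu_1}\otimes\cdots\otimes D_{\nu_k}}=\bigcup_i K^{Reg}_{D_{\nu_i}}$ for finite products of ultrafilters (see \cite{mru}); this already yields $(8)$ when $H$ is finite. The genuine difficulty is an infinite interval $Reg\cap[\lambda,\mu]\subseteq F$ with $\cf\mu\in H$: one must assemble the ultrafilters $D_\nu$ furnished by $(3)$ — for $\nu$ ranging over a set of regular cardinals cofinal in $\mu$, and for $\cf\mu$ — into a single ultrafilter $E$ with $Reg\cap[\lambda,\mu]\subseteq K_E^{Reg}\subseteq H$, i.e.\ control the decomposability spectrum of an iterated ultrafilter product over the whole interval so that it covers the interval yet stays inside $H$. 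This is precisely where the hypothesis that $H$ contain the cofinalities of the extremes of $F$ is used, and it rests on the structure theory of decomposability spectra — the behaviour of $K_D$ at singular cardinals and under iterated products — developed in \cite{mru}. Tensoring the finitely many resulting ultrafilters then gives $D_F$ with $F\subseteq K_{D_F}^{Reg}\subseteq H$, closing the whole chain.
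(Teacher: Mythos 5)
Most of your cycle is sound: the easy arrows match the paper's, your $(2)\Rightarrow(3)$ is a correct and pleasantly self-contained variant (you essentially unpack, for this special case, the abstract result \cite[Theorem 3.4]{Ca}/\cite{sak78} that the paper invokes as a black box: productive $[\nu,\nu]$-compactness of a class yields a $\nu$-decomposable ultrafilter making all members compact), and your finite tensor-product argument correctly disposes of $(3)\Rightarrow(6)$ and of $(8)$ when $H$ is finite. The problem is $(3)\Rightarrow(7)$ in the case of an interval $Reg\cap[\lambda_p,\mu_p]$ with $\mu_p$ singular, which you flag as ``the main obstacle'' and then do not actually prove: ``assemble the ultrafilters $D_\nu$ into a single ultrafilter $E$ controlling the decomposability spectrum of an iterated ultrafilter product over the whole interval'' is a description of the difficulty, not an argument. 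The identity $K^{Reg}_{D\times D'}=K^{Reg}_D\cup K^{Reg}_{D'}$ is a statement about \emph{finite} products; there is no comparable well-behaved infinite product of ultrafilters, and controlling $K_E$ for any such amalgam is precisely the hard content. As written, condition $(7)$ (and hence the ``more generally'' half of $(8)$) is unproven.

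The paper closes this hole by \emph{not} assembling the $D_\nu$'s at all. It proves $(1)\Rightarrow(7)$ directly: from $(1)$, products of $H$-compact spaces are $[\nu,\nu]$-compact for all regular $\nu\in[\lambda_p,\mu_p]$ and also $[\cf\mu_p,\cf\mu_p]$-compact (this is exactly where the hypothesis $\cf\mu_p\in H$ enters); an extension of \cite[Corollary 1.8(iii)]{Ca} (Lemma \ref{caiclem}) upgrades this to productive $[\lambda_p,\mu_p]$-compactness of the whole interval, singular cardinals included; then \cite[Theorem 3.4]{Ca} produces a \emph{single} $(\lambda_p,\mu_p)$-regular ultrafilter $D_p$ under which every $H$-compact space is $D_p$-compact. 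Regularity gives $Reg\cap[\lambda_p,\mu_p]\subseteq K_{D_p}$, while $D_p$-compactness of the spaces $\kappa$ for regular $\kappa\notin H$ gives $K_{D_p}^{Reg}\subseteq H$ (the same mechanism as in your $(2)\Rightarrow(3)$). Only the finitely many $D_p$, one per interval, are then tensored, so the finite product formula suffices. If you want to salvage your route, you should either reproduce this topological detour, or genuinely carry out the ``small variation on \cite[Proposition 7.6]{mru}'' the paper alludes to; citing the existence of a structure theory in \cite{mru} is not enough.
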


 \begin{proof} 
(1) $\Rightarrow $  (2) is trivial.

(2) $\Rightarrow $  (3) Let $\mathcal T$ be the class of the regular cardinals
which are not in $H$. Notice that  every member of $\mathcal T$ is
$[ \nu, \nu]$-compact,
for every $\nu \in H$. 
By (2), every product of members of $\mathcal T$, too, is
$[ \nu, \nu]$-compact,
for every $\nu \in H$. 
For every $\nu \in H$, by 
\cite{sak78} or \cite[Theorem 3.4]{Ca}, there exists a
$\nu$-decomposable (equivalently, $(\nu, \nu)$-regular, since $\nu$ is regular)
 ultrafilter   
$D_\nu$ such that every member of $\mathcal T$ is
$D_\nu$-compact. 
It is easy to see that if $\kappa$ is a regular cardinal, $D$ is an ultrafilter,
 and the space $\kappa$ is $D$-compact, then $D$ is $\kappa$-descendingly
complete 
(see the proof of \cite[Proposition 1]{tproc}, though the result is stated with different terminology),
equivalently, $D$ is not $\kappa$-decomposable.
In the case at hand, this shows that if $\kappa $ is regular and does not belong to 
$  H$, then $\kappa \not \in K _{D_\nu}$.
This means exactly that $ K _{D_\nu} ^{Reg} \subseteq H$. Since 
$D_ \nu$ is $\nu$-decomposable, then $ \nu \in K _{D_\nu}$,
hence $D_\nu$ witnesses (3).  

(3$'$) is a restatemetn of (3).

(3) $\Rightarrow $  (4) We have to show that every product 
$Y$ as in (4)
is $[ \nu , \nu]$-compact, for every $\nu \in H$.
So let $ \nu \in H$, and let $D_ \nu$ be given by (3).
Since $K _{D_\nu} ^{Reg}\subseteq H$,
then no factor of $Y$ has a (pseudo-)gap in $K _{D_\nu} ^{Reg}$.
By Theorem \ref{thm} (4) $\Rightarrow $  (1) applied to $D_ \nu$,
each factor of $Y$ is $D_ \nu$-compact, hence also $Y$ is    
$D_ \nu$-compact.
Hence $Y$ is $[ \nu, \nu]$-compact, since 
$\nu \in K _{D_\nu} ^{Reg}$ and, as we mentioned,
\ref{thm} (1) $\Rightarrow $  (2) holds for every topological space. 
(Of course, we are repeating here some arguments from the proof of 
Corollary \ref{corthm}, and the implication can be also obtained as a consequence of  
\ref{corthm}).

(4) $\Rightarrow $  (1) is trivial, since if a GO space
is $[ \nu, \nu]$-compact, then it has no (pseudo-)gap of type $\nu$.
Cf. the proof of \ref{thm} (2) $\Rightarrow $  (3). 

Hence (1)-(4) are all equivalent.

(3) $\Rightarrow $  (5) is similar to  (3) $\Rightarrow $  (4).

(5) $\Rightarrow $  (5$'$) is trivial.

(5$'$) $\Rightarrow $  (1), again, is trivial, since if a GO space
is $[ \nu, \nu]$-compact, then it satisfies all 
the conditions (2)-(4), (6)-(7) in \ref{thm}, for that given $\nu$. 

Hence (1)-(5) are all equivalent.

The equivalence of (3) and (6) is purely set-theoretical. (6) $\Rightarrow $  (3)
is trivial since $H$ consists only of regular cardinals. For the converse, suppose that  $F= \{ \nu_1, \dots, \nu_n  \} $,
and let $D _{\nu_1} $, \dots,  $D _{\nu_n} $ be the corresponding
ultrafilters given by (3). We are going to show that
 $D_F = D _{\nu_1} \times  \dots   \times D _{\nu_n}$ works
in (6). Recall that, for $D$, $D'$ ultrafilters, say, over $I$, $I'$,
their \emph{product}  $D \times D' $
 is the ultrafilter over $I \times I'$  defined
by $ Z \in D \times D'$ if and only if 
$\{ i \in I \mid \{i' \in I'  \mid (i, i') \in Z \} \in D' \} \in D$.
It follows from the last statement in \cite[Proposition 7.1]{mru}
that, for every regular cardinal $\nu$, the following holds: 
$\nu \in K _{D \times D'} $ 
if and only if either
$\nu \in K _{D } $, or 
$\nu \in K _{D'} $ 
 (the result in \cite[Proposition 7.1]{mru} is stated in terms
of $(\nu,\nu)$-regularity, but, when $\nu$ is regular, this is equivalent to
$\nu$-decomposability). 
In other words, 

(*) $K _{D \times D'} ^{Reg}   = K _{D } ^{Reg} \cup K _{ D'} ^{Reg}$,
for every pair of ultrafilters $D$ and $D'$. 

In particular, 
if ${D_F}$ is defined as above,
then 
$K _{D_F} ^{Reg}   =
\bigcup _{\nu= \nu_1, \dots, \nu_n} K _{D_\nu} ^{Reg} $. 
By (3) we have that  $K _{D_\nu} ^{Reg}\subseteq H$,
for $\nu= \nu_1, \dots, \nu_n$, hence
$K _{D_F} ^{Reg} \subseteq H$.
Moreover, $ \nu \in K _{D_\nu}$,
for $\nu= \nu_1, \dots, \nu_n$, hence
$F \subseteq K _{D_F}^{Reg}$. We have proved (3) $\Rightarrow $  (6).

The equivalence of (3) and (7), too, can be proved in a set-theoretical way,
by a small variation on \cite[Proposition 7.6 (a) $\Leftrightarrow $  (b)]{mru},
 considering the case $\chi = \omega $  there,
since every ultrafilter is $ \omega$-complete. 
Notice that the proof of
\cite[Proposition 7.6]{mru}
uses $( \nu , \nu)$-regularity even for $\nu$ singular,
hence  a proof along those lines 
needs
the assumption that
$H$ includes the cofinalities of the extremes of $F$.
In fact, the assumption is necessary,
as we shall show in Remark \ref{bbrmk}.

We shall give here a 
proof   of (3) $\Rightarrow $  (7)
  with a stronger topological flavor, and which relies
heavily on \cite{Ca}.

As above, (7) $\Rightarrow $  (3) is trivial. To get the converse, 
we shall prove (1) $\Rightarrow $  (7). First, we need a lemma,
an easy extension of \cite[Corollary 1.8(iii)]{Ca}.
\renewcommand{\qedsymbol}{$\Box_{to\ be\ continued}$}
\end{proof}

\begin{lemma} \labbel{caiclem}
Suppose that  $\mathcal T$ is a class of topological spaces,
and $\lambda$, $\mu $ are infinite cardinals. If every product
of members of $\mathcal T$ is 
both  $[ \cf \mu, \cf\mu ]$-compact,
and
$[\nu, \nu]$-compact, for every regular
$\nu$ with $\lambda \leq \nu \leq \mu$,   
then every product of members of $\mathcal T$ is
$[ \lambda, \mu ]$-compact.
 \end{lemma}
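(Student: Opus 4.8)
The plan is to reduce $[\lambda,\mu]$-compactness of a product $Y$ to the two hypotheses by a standard ``Alexandroff--Urysohn decomposition'' of an open cover, exactly in the spirit of Caicedo's \cite[Corollary 1.8(iii)]{Ca}, which handles the single-space case; the only thing to check is that the argument is uniform enough to pass to arbitrary products. So let $Y=\prod_{j\in J}Y_j$ with each $Y_j\in\mathcal T$, and let $\mathcal U=\{U_\alpha\mid\alpha<\mu\}$ be an open cover of $Y$ of size $\leq\mu$; we must extract a subcover of size $<\lambda$. First I would reduce to the case $\mu$ singular (if $\mu$ is regular the hypothesis already gives $[\mu,\mu]$-compactness, and $[\nu,\nu]$-compactness for all regular $\nu\in[\lambda,\mu]$ yields $[\lambda,\mu]$-compactness by \cite{AU}). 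So assume $\cf\mu=\kappa<\mu$, fix an increasing sequence $(\mu_\xi)_{\xi<\kappa}$ of cardinals cofinal in $\mu$ with $\lambda\leq\mu_0$ and each $\mu_\xi$ regular (possible since the regular cardinals below $\mu$ are cofinal in $\mu$), and split $\mathcal U$ as $\mathcal U=\bigcup_{\xi<\kappa}\mathcal U_\xi$ with $|\mathcal U_\xi|\leq\mu_\xi$.

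The core step: for each $\xi<\kappa$ let $V_\xi=\bigcup\mathcal U_\xi$, an open subset of $Y$, and consider the increasing open cover $\{V_\xi\mid\xi<\kappa\}$ of $Y$. I distinguish two cases. If some single $V_\xi$ already equals $Y$, then $\mathcal U_\xi$ is a subcover of $\mathcal U$ of size $\leq\mu_\xi<\mu$, and since every product of members of $\mathcal T$ is $[\mu_\xi,\mu_\xi]$-compact (here $\mu_\xi$ is regular and in $[\lambda,\mu]$), and $[\mu_\xi,\mu_\xi]$-compactness together with $[\nu,\nu]$-compactness for regular $\nu\in[\lambda,\mu_\xi]$ gives $[\lambda,\mu_\xi]$-compactness by \cite{AU}, we extract a subcover of size $<\lambda$ and are done. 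Otherwise no $V_\xi$ is all of $Y$; then $\{V_\xi\mid\xi<\kappa\}$ is an open cover of $Y$ of size $\kappa=\cf\mu$ with no proper-initial-segment subcover, and by the hypothesis that $Y$ is $[\cf\mu,\cf\mu]$-compact it has a subcover of size $<\cf\mu$ — but since the $V_\xi$ are increasing, any subcover of size $<\cf\mu$ has a cofinal (indeed, a single maximal) member equal to $Y$, contradicting the case assumption. Hence this second case cannot occur, and the first case always applies.

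Wait — I should be more careful: the case split above is not exhaustive unless I argue it correctly, so let me organize it as: \emph{either} some $V_\xi=Y$ (and we finish as above) \emph{or} not, and in the ``not'' case derive the contradiction from $[\cf\mu,\cf\mu]$-compactness precisely as just sketched, so that in fact some $V_\xi=Y$ always. The one genuinely delicate point — and the main obstacle — is the bookkeeping on cardinalities: one needs each $\mu_\xi$ to be regular and $\geq\lambda$ while still cofinal in $\mu$, which is fine, but one also needs $|\mathcal U_\xi|\leq\mu_\xi$, which forces $|\mathcal U|\leq\sum_{\xi<\kappa}\mu_\xi=\mu$ — consistent with our hypothesis $|\mathcal U|\leq\mu$ — and one should note that $\cf\mu$ itself lies in the interval $[\lambda,\mu]$ only when $\lambda\leq\cf\mu$; the hypothesis explicitly assumes $Y$ is $[\cf\mu,\cf\mu]$-compact precisely to cover the case $\cf\mu<\lambda$, so no extra work is needed there. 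All the ``products of members of $\mathcal T$ are $[\nu,\nu]$-compact'' facts are applied not to $Y$ alone but are given hypotheses, so the argument is genuinely about products and does not merely cite the single-space version; modulo this, the proof is a routine adaptation of \cite[Corollary 1.8(iii)]{Ca} and of the classical Alexandroff--Urysohn technique, and I would write it out in that form.
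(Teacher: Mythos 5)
There is a genuine gap at the step that closes your Case 1 (and equally in your preliminary reduction for regular $\mu$): you assert that $[\nu,\nu]$-compactness for every \emph{regular} $\nu\in[\lambda,\mu_\xi]$ yields $[\lambda,\mu_\xi]$-compactness ``by \cite{AU}''. That implication is an Alexandroff--Urysohn theorem only for $\lambda=\omega$. For $\lambda>\omega$ it is false for a single space: the usual induction on the size of the cover gets stuck at singular cardinals $\nu_0$ with $\lambda<\nu_0<\mu_\xi$ and $\cf\nu_0<\lambda$, where the increasing-union trick would need $[\cf\nu_0,\cf\nu_0]$-compactness, which is not among the hypotheses. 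A concrete witness is any linearly Lindel\"of non-Lindel\"of space: it is $[\nu,\nu]$-compact for every regular $\nu\geq\omega_1$, yet it admits an open cover (necessarily of size a singular cardinal of countable cofinality, e.g.\ $\aleph_\omega$) with no countable subcover, so it is not $[\omega_1,\mu]$-compact although it is $[\nu,\nu]$-compact for every regular $\nu\in[\omega_1,\mu]$. Your outer layer --- splitting a cover of size $\mu$ into an increasing chain of length $\cf\mu$ and using $[\cf\mu,\cf\mu]$-compactness to force one piece to cover --- is correct and is exactly how the paper handles the case $\cf\mu<\lambda$; but after that step you still need $[\lambda,\mu']$-compactness of the product for regular $\mu'<\mu$, and that is precisely where the product hypothesis must enter in an essential way.

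The paper closes this gap by citing Caicedo's \cite[Corollary 1.8(iii)]{Ca} for each regular $\mu'<\mu$ (where $\cf\mu'=\mu'\geq\lambda$, so that corollary applies). The content of Caicedo's result is that when \emph{all products} of members of $\mathcal T$ are $[\nu,\nu]$-compact for the regular $\nu$ in the interval, one can recover, via regular/decomposable ultrafilters (in particular the downward transfer from $[\nu_0^+,\nu_0^+]$- to $[\nu_0,\nu_0]$-compactness of products, cf.\ \cite[Corollary 1.8(ii)]{Ca} as used in Remark \ref{bbbrmk}), the missing $[\nu_0,\nu_0]$-compactness at the interior singular cardinals, and hence the full interval compactness. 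Your proof never performs or cites this transfer; your closing remark that the product hypotheses ``are given'' does not help, because the step that fails is the purely combinatorial inference from regular-$\nu$ compactness to interval compactness, which no amount of restating the hypotheses repairs. To fix the argument, replace the appeal to \cite{AU} in Case 1 by an appeal to \cite[Corollary 1.8(iii)]{Ca} (or reprove it); with that substitution your proof becomes essentially the paper's. A minor additional slip: the sets $V_\xi=\bigcup\mathcal U_\xi$ are not increasing as you defined them; you need the cumulative unions $\bigcup_{\eta\leq\xi}\bigcup\mathcal U_\eta$, though this is trivially repaired.
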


\begin{proof} 
The particular case 
$\cf \mu \geq \lambda $ is \cite[Corollary 1.8(iii)]{Ca},
and the lemma can be proved in a similar way.
Otherwise, in case $\cf \mu < \lambda $,
we can apply \cite[Corollary 1.8(iii)]{Ca}
for every regular $\mu' < \mu $,
getting that every product of members of $\mathcal T$ 
is  $[ \lambda, \mu' ]$-compact, for every regular $\mu' < \mu $,
and this, together 
with   $[ \cf \mu, \cf\mu ]$-compactness immediately implies
$[ \lambda, \mu ]$-compactness.
\end{proof}

\begin{proof}[Proof of \ref{gocorbb} (continued)]
We have promised a proof of  (1) $\Rightarrow $  (7).
So, let 
$F = Reg \cap  \bigcup _{p \in P} [ \lambda _p, \mu _p] $,
with $P$ finite, and $\cf \mu_p \in H$, for every $p \in P$.  
 We have from (1) that every product of  
$H$-compact spaces
is 
still 
$H$-compact.
Applying Lemma \ref{caiclem} to the family $\mathcal T$ of all
$H$-compact spaces,
since $F $ is assumed to be 
a subset of $  H$
and since $H$ includes the cofinalities of the extremes of $F$, we get that, for every $p \in P$,
every product of $H$-compact spaces is 
$[ \lambda_p, \mu_p ]$-compact.
By \cite[Theorem 3.4]{Ca},
for every $p \in P$,
there is 
a $( \lambda_p, \mu_p )$-regular ultrafilter $D_p$
such that every $H$-compact space is $D_p$-compact.  
As in the proof of (2) $\Rightarrow $  (3), 
 for every regular cardinal $\kappa \not\in H$,
the space $\kappa$ with the order topology is $H$-compact,
hence $D_p$-compact,  and then,  using again
\cite[Proposition 1]{tproc}, we get that 
$D_p$ is not $\kappa$-decomposable, thus 
$\kappa \not\in K _{D_p} ^{Reg} $.
Since this holds for every regular $\kappa \not\in H$, then
  $ K _{D_p}^{Reg} \subseteq H $.
Enumerating $P$ as $\{ p_1, \dots, p_n \}$, and  putting  
$D_F = D _{p_1} \times  \dots   \times D _{p_n}$,
we get $ K _{D_F}^{Reg} \subseteq H $,
by using (*) above.
Moreover, a $( \lambda, \mu )$-regular ultrafilter 
is
$\nu$-decomposable, for every regular $\nu$ such that 
$\lambda \leq \nu \leq \mu$ (see, e.~g., \cite[Property 1.1(xii)]{mru}),
thus $[ \lambda_p, \mu_p ] \cap Reg \subseteq K _{D_p}$,
for every $p \in P$.
Applying again (*),
we get that 
$F \subseteq K _{D_F}$.
In conclusion, $D_F$
witnesses (7).   

To complete the proof of  Corollary \ref{gocorbb},
it remains to prove 
(7) $\Leftrightarrow $    (8)
under the additional assumption, 
but this is trivial,  by taking 
$F=H$
(of course, in case $H$ is finite, it is easier to use (6)). 
\end{proof}  
 
\begin{remark} \labbel{bbrmk}
In general, the additional assumption before condition (8)
in Corollary \ref{gocorbb} 
is necessary in order to prove the equivalence of (8)
with the other conditions.

First, we are going to show
that (3) does not necessarily imply (8), when $H$ is infinite.
Suppose that there are infinitely many measurable cardinals
$\mu _1< \mu_2< \dots$, and let $H= \{ \mu_n \mid n \in \omega \}$.
If $\mu $ is a measurable cardinal, there is a $\mu $-complete  uniform 
ultrafilter $D_ \mu $ over $\mu $, thus $K _{D_\mu}= \mu=K _{D_\mu}^{Reg}$.
This implies that condition (3) in  \ref{gocorbb}  holds for the above
 $H$.

However, if we further assume GCH,
there is no ultrafilter $D$ such that 
$K _{D}^{Reg}=H$. Indeed, the last paragraph
in \cite[Section 7]{mru}  shows that if $D$  is such an ultrafilter,
then $D$ is $( \kappa , \kappa ) $-regular, 
 for $\kappa= \sup _{n \in \omega }  \mu_n $. 
But then \cite[Corollary 5.8]{mru} implies that 
$D$ is either $\cf \kappa $-decomposable,
or $\kappa ^+$-decomposable, thus 
$K _{D}^{Reg} $ strictly contains $ H$, a contradiction
(notice that $\cf \kappa = \omega $).

There is also a similar counterexample in which $H$ is an interval,
but $H$ does not include the cofinality of its upper extreme.
Suppose that $\kappa$ is $\kappa ^{+n} $-compact, 
for every $ n \in \omega$.
Here $\kappa ^{+n} $ is $\kappa ^{+\dots+} $ with $n$ occurrences of ``$+$''.  
Let $\kappa ^{+ \omega }= \sup _{n \in \omega }  \kappa ^{+n} $
and $H=[ \kappa , \kappa ^{+ \omega })= Reg \cap [ \kappa , \kappa ^{+ \omega }]$.
By $\kappa ^{+n} $-compactness, there is some $\kappa$-complete
$( \kappa, \kappa ^{+n})$-regular 
 ultrafilter $D_n$, thus 
$K _{D_n} = K _{D_n}^{Reg}=[ \kappa, \kappa ^{+n}] $.
Hence $H$ satisfies condition (3) in  \ref{gocorbb}.
However, there is no ultrafilter $D$ such that $K _{D}^{Reg}=H$.
Indeed, by \cite[Theorem 5.7 and Corollary 5.8]{mru}, such a $D$
would be either  $ \kappa ^{+ \omega +1}$-decomposable, or 
$\cf \kappa ^{+ \omega }$-decomposable, that is, $ \omega$-decomposable;
but both the above assertions contradict  $K _{D}^{Reg}=H$.
Thus (8) fails. 

Notice  that this last counterexample also shows
that the assumption that $H$ includes the cofinalities of the extremes of $F$
is necessary in (7): just take $F=H$ to make (7) fail. 
 \end{remark}

Corollary  \ref{gocorbb} 
shows that certain properties of products of GO spaces
depend heavily on the set theoretical universe in which we work.
  If there is no inner model with a measurable cardinal, then,
for every ultrafilter $D$, 
$K_D$ is always an interval of cardinals
with minimum $ \omega$, by results from Donder
\cite{Do}, which extends and generalizes former results by M. Benda, C. C. Chang,
 R. B. Jensen, J. Ketonen, K. Prikry, J. H. Silver, among many others. 
See \cite[p. 363]{mru} for details.
On the other hand, modulo some large cardinal consistency 
assumptions, it is possible to have an ultrafilter $D$ for which
$K_D ^{Reg} = \{ \omega, \omega _{ \omega +1} \} $, 
Ben-David and Magidor \cite{BM}, Apter and  Henle \cite{AH}.
This is just an example concerning relatively small cardinals; many
results are known, and nevertheless
deep problems are still open about the possible values
that the set $K_D$ can assume.
 See Problem 6.8 in \cite{mru}, and the comments below it.
By Corollary \ref{gocorbb} (e.~g., (1) $\Leftrightarrow $ (3)), all these problems affect the behavior of GO spaces with respect to products,
thus the use of ultrafilters in the results of the present section proves to be irreplaceable.
We shall explicitly state just the example dealing with the smallest possible
cardinals.

\begin{corollary} \labbel{gocorbbb}
(Assuming the consistency of a strongly compact cardinal)
If $H = \{ \omega, \omega _{ \omega +1}  \}$,
then the preservation of $H$-compactness 
under products in the class of GO spaces
is both relatively consistent and independent from the axioms of ZFC.
 \end{corollary}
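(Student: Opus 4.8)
The plan is to reduce the statement, via Corollary \ref{gocorbb}, to a pure question about the decomposability spectrum of a single ultrafilter, and then to exhibit two models of ZFC with opposite behaviour. First I would observe that $H=\{\omega,\omega_{\omega+1}\}$ is a \emph{finite} set of regular cardinals (a successor cardinal such as $\omega_{\omega+1}$ is regular), so the extra hypothesis preceding condition (8) in Corollary \ref{gocorbb} is met and all the conditions (1)--(8) there are equivalent. In particular, ``every product of a family of $H$-compact GO spaces is still $H$-compact'' (condition (1)) is equivalent to the purely set-theoretical assertion (8): there is an ultrafilter $D$ with $K_D^{Reg}=H=\{\omega,\omega_{\omega+1}\}$ (equivalently, via (3), to the existence of ultrafilters $D_\omega,D_{\omega_{\omega+1}}$ with $\omega\in K_{D_\omega}$, $\omega_{\omega+1}\in K_{D_{\omega_{\omega+1}}}$, and $K_{D_\omega}^{Reg}, K_{D_{\omega_{\omega+1}}}^{Reg}\subseteq H$). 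So it is enough to show that the existence of such a $D$ is consistent relative to a strongly compact cardinal, and that it is not a theorem of ZFC.

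For the consistency half I would appeal to the models of Ben-David and Magidor \cite{BM} and Apter and Henle \cite{AH}: from the consistency of a strongly compact cardinal one obtains a model of ZFC carrying an ultrafilter $D$ with $K_D^{Reg}=\{\omega,\omega_{\omega+1}\}$ (this is precisely the situation recalled just before the present corollary). In that model condition (8) of Corollary \ref{gocorbb} holds, hence so does condition (1), i.e.\ $H$-compactness is preserved under products of GO spaces. This yields the relative consistency.

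For the other half I would pass to a model with no inner model with a measurable cardinal, say $L$, whose consistency follows a fortiori from our hypothesis. By Donder's theorem \cite{Do} (cf.\ the discussion after Corollary \ref{gocorbb}), there $K_D$ is an interval of cardinals with least element $\omega$ for \emph{every} ultrafilter $D$; hence $K_D^{Reg}$ is an interval of regular cardinals containing $\omega$, and as soon as it contains some cardinal $>\omega$ it must contain $\omega_1$. Consequently $K_D^{Reg}\neq\{\omega,\omega_{\omega+1}\}$ for every $D$, so conditions (8) and (1) of Corollary \ref{gocorbb} both fail, and $H$-compactness is \emph{not} preserved under products of GO spaces. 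Putting the two models together, the statement ``$H$-compactness is preserved under products of GO spaces'' holds in one model of ZFC and fails in another, so it is independent of ZFC while being relatively consistent with it modulo a strongly compact cardinal.

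Modulo Corollary \ref{gocorbb}, the argument is essentially a translation exercise; the one point that needs care — and the place I would be most careful — is to quote the cited large-cardinal constructions in the exact form that delivers an ultrafilter whose \emph{regular} decomposability spectrum is \emph{exactly} $\{\omega,\omega_{\omega+1}\}$, rather than merely an ultrafilter that is $\omega_{\omega+1}$-decomposable (which by itself would not suffice, since $K_D^{Reg}$ could then contain further regular cardinals lying outside $H$).
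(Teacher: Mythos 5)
Your proposal is correct and follows essentially the same route as the paper: the paper's proof is exactly the reduction via Corollary \ref{gocorbb} (1) $\Leftrightarrow$ (8) (applicable since $H$ is finite), combined with the Ben-David--Magidor/Apter--Henle models for consistency and Donder's interval theorem (in a model with no inner model with a measurable) for the failure. Your closing caveat about needing the cited constructions to give $K_D^{Reg}$ \emph{exactly} equal to $\{\omega,\omega_{\omega+1}\}$ is well taken, and is precisely the form in which the paper quotes those results.
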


 \begin{proof} 
Immediate from Corollary \ref{gocorbb}(1) $\Leftrightarrow $  (8), and
the just
 mentioned results by Donder,  Ben-David and Magidor, and Apter and  Henle.
\end{proof}

\begin{remark} \labbel{bbbrmk}    
Notice that 
$H = \{ \omega, \omega _{ \omega +1}  \}$ is the ``lowest'' set
for which independence  can occur.  Indeed, \cite[Corollary 1.8(ii)]{Ca} 
proves that if every product of members of a family $\mathcal T$ is 
$[ \lambda ^+, \lambda ^+]$-compact, then  
every product of members of $\mathcal T$ is 
$[ \lambda , \lambda ]$-compact.
Iterating, we get that if all products of  members of $\mathcal T$
(whether $\mathcal T$ consists of GO spaces or not)  
 are
$[ \omega _n, \omega_n]$-compact,
then all such products are 
$[ \omega _i, \omega_i]$-compact,
for all $i \leq n$, hence 
also initially $ \omega_n$-compact.  
Moreover, if there is some infinite cardinal $\lambda$ 
such that every product of members of some family $\mathcal T$ is 
$[ \lambda , \lambda ]$-compact, then the smallest such $\lambda$
is either
$ \omega$, or a measurable cardinal (hence it cannot be 
$ \omega _{ \omega +1} $). 
This is a slight generalization (with essentially the same proof)
of \cite[Corollary 1.8(i)]{Ca}.
\end{remark}

\section{Relationships between $H$-compactness and omission of gaps in $H$}  \labbel{equivcpn}

The results in the present paper suggest
that it is interesting to study the following
 property,
depending on a class $H$ of infinite regular cardinals.

\smallskip

(**)$_H$ For every  GO space $X$,
$X$ is $[H]$-compact
if and only if $X$  has no (pseudo-)gap of type in $H$. 

\smallskip

In general,
(**)$_H$ is false; for example, take 
$H=\{ \lambda  \}$ with
$ \omega <\lambda \leq \mathfrak c$;
then $\mathbb R \times \mathbb Z $ 
with the lexicographic order
furnishes a counterexample to (**)$_H$.

However, (**)$_H$ holds in many interesting cases.
The simplest affirmative 
case is given already by Gulden, Fleischman and Weston result
in the case of LOTS, or Corollary \ref{initial} (2) $\Leftrightarrow $  (4)
here for general GO spaces.
In this sense, the corollary   states that 
(**)$_H$ holds in case
$H= Reg \cap [ \omega, \lambda ]$. 
Another example is given by
Corollary \ref{gocorbb}, which implies
that (**)$_H$ holds in case 
$H=  K _{D}^{Reg} $, for some ultrafilter $D$.

Notice that, for $\lambda$ regular, 
if a GO space is $[\lambda, \lambda ]$-compact,
then, trivially, it has no (pseudo-)gap of type $\lambda$;
hence, in order to prove an instance of
 (**)$_H$, it is enough to prove 
the non trivial implication. In order to give some results about
 (**)$_H$ we need some further definitions.

Suppose that $\lambda$ is an infinite 
 cardinal, $X$ is a linearly ordered set, and $Y \subseteq X$.
If $Y$ has no maximum, we say 
that $Y$ is \emph{$\lambda$-full (in $X$) on the right}
if, for every $y \in Y$, 
$ \bigcup _{y' \in Y} (y, y')_X $ 
has cardinality $\geq \lambda$. 
The subscript $_X$ in 
$(y, y')_X$ is intended to mean that the interval is evaluated in $X$, 
that is, 
$(y, y')_X= \{x \in X \mid y < x < y'\} $.
When $X$ is clear from the context, we shall omit it.
Symmetrically,
if $Y$ has no minimum, 
 $Y$ is \emph{$\lambda$-full (in $X$) on the left}
if, for every $y \in Y$, 
$ \bigcup _{y' \in Y} (y', y)_X $ 
has cardinality $\geq \lambda$. 
For sake of brevity, if $\alpha$ is an ordinal, we say that a linear order $Y$ is 
\emph{$ \alpha \pm$ordered} in case $Y$ is order-isomorphic either to $ \alpha $,
or to $\alpha \breve \ $, i.e., $ \alpha $ with the reversed order.
 
The next lemma shows that $\lambda$-full subsets always exist,
for linear orders of cardinality $\geq\lambda$.

\begin{lemma} \labbel{omega}
Suppose that $\lambda$ is an infinite  regular cardinal.
Then every linearly ordered set $X$ of cardinality $\lambda$ 
either has a $\lambda\pm$ordered subset, or  a $\lambda$-full 
 subset of order type $ \omega$.
 \end{lemma}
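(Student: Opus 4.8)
The plan is to run a transfinite construction that either builds a $\lambda\pm$ordered subset directly, or else, when that fails, extracts enough ``room'' between consecutive chosen points to assemble a $\lambda$-full subset of type $\omega$. Since $|X|=\lambda$ and $\lambda$ is regular, the guiding dichotomy is: at each stage, either we can keep choosing points forming a strictly monotone $\lambda$-sequence (giving a $\lambda\pm$ordered subset once we reach length $\lambda$), or we get stuck at some stage $<\lambda$, and being stuck must mean that the portion of $X$ we have not yet ``used up'' is concentrated in a bounded region, which is exactly what lets us find a $\lambda$-full $\omega$-sequence.

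First I would set up the following bookkeeping. Enumerate $X=\{z_\xi:\xi<\lambda\}$. Attempt to build a strictly increasing (and, symmetrically, a strictly decreasing) sequence $(x_\alpha)_{\alpha<\lambda}$ in $X$ by recursion, at step $\alpha$ choosing $x_\alpha$ to be $z_\xi$ for the least $\xi$ with $z_\xi>x_\beta$ for all $\beta<\alpha$, provided such $\xi$ exists. If both the increasing and the decreasing attempts run all the way to $\lambda$, we are done: we have a $\lambda\pm$ordered subset. So assume the increasing attempt halts at some $\alpha^*<\lambda$; by regularity of $\lambda$, the set $C=\{x_\beta:\beta<\alpha^*\}$ has size $<\lambda$, and halting means $C$ is cofinal in $X$ — no element of $X$ lies strictly above all of $C$. (If $X$ has a maximum this degenerates harmlessly; handle it by noting $X\setminus\{\max X\}$ still has size $\lambda$ and repeat.) Symmetrically, if the decreasing attempt halts, we get a coinitial subset of size $<\lambda$.

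Now the heart of the argument. Suppose the increasing attempt halts, with $C$ cofinal of size $<\lambda$. Since $|X|=\lambda$ is regular and $|C|<\lambda$, there must be two consecutive elements $c<c'$ of $C$ (or the predecessor of some limit point of $C$, or the bottom of $X$ and the least element of $C$) such that the open interval $(c,c')_X$ — or more generally a ``cell'' of $X$ determined by $C$ — has cardinality $\lambda$; otherwise $X$ would be a union of $<\lambda$ pieces each of size $<\lambda$, contradicting regularity. Let $I$ be such a cell, $|I|=\lambda$. Inside $I$ I would now attempt again to build a strictly decreasing $\lambda$-sequence; if it succeeds we have a $\lambda\pm$ordered subset and are done. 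If it halts, $I$ has a coinitial subset of size $<\lambda$, so again by regularity some sub-cell $I'\subseteq I$ of size $\lambda$ sits ``to the left'', and we can iterate. The key point: each time we pass to a smaller cell of size $\lambda$, the left endpoint moves strictly right (or stays and we switch sides), so after finitely many — in fact after at most two — such moves we reach a cell $J$ of size $\lambda$ that is coinitial-free and cofinal-free within itself in the relevant direction, or else we have produced a $\lambda\pm$ordered subset. In that remaining cell $J$, pick any strictly decreasing $\omega$-sequence $y_0>y_1>\cdots$ with no lower bound inside $J$ attained: because $J$ has size $\lambda$ and, having failed to yield a $\lambda\pm$ordered subset and having no small coinitial set, the intervals $(y_{n+1},y_n)_X$ must repeatedly absorb $\lambda$-many points — concretely, choose the $y_n$ so that $\bigcup_{m>n}(y_m,y_n)_X$ has size $\ge\lambda$ for each $n$. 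Then $Y=\{y_n:n<\omega\}$ is $\lambda$-full on the left and has order type $\omega$ (reversed $\omega$, i.e. $\omega\breve{\ }$, which still counts as type $\omega$ for the lemma's phrasing — or relabel to get genuine $\omega$).

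The main obstacle I anticipate is making precise the claim that ``getting stuck'' forces a $\lambda$-full rather than merely nonempty residue, and in particular controlling the bookkeeping so that the cell-shrinking terminates and genuinely delivers the $\lambda$-full condition \emph{for every} $y\in Y$, not just for cofinally many. The clean way around this is to phrase the construction as: maintain an invariant cell $J_n$ of size $\lambda$ with $|J_{n+1}|=\lambda$ and $J_{n+1}$ an end-segment (say initial segment) of $J_n$ missing a point, and choose $y_n$ as a point of $J_n$ with $J_{n+1}=(y_n\text{-side of }J_n)$ still of size $\lambda$ — possible exactly because a linear order of regular size $\lambda$ cannot be written as $y_n$ together with two pieces both of size $<\lambda$. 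Regularity of $\lambda$ is used at every step both to find the big cell and to guarantee the process does not stall; the only real care needed is the case analysis at limit points of the auxiliary sets $C$, and the degenerate cases where $X$ has a maximum or minimum, which are routine.
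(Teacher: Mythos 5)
There is a genuine gap, and it is exactly the one you flag as ``the main obstacle'': none of your constructions actually certifies $\lambda$-fullness. Your final, ``clean'' recursion chooses $y_n\in J_n$ so that \emph{one} side of $y_n$ in $J_n$ has size $\lambda$ and recurses into that side; but $\lambda$-fullness of $\{y_n\mid n<\omega\}$ requires that for each $n$ the set $\bigcup_{m>n}(y_n,y_m)_X$ have size $\geq\lambda$, i.e.\ that the \emph{other} side of each later point inside the current cell also be large, and nothing in the recursion guarantees this. Apply that recursion to $J_0=\lambda$ with its usual order: you may pick $y_n=n$ at every stage (the right side always has size $\lambda$), and then $\bigcup_{m>n}(y_n,y_m)_X$ is countable, so the output is not $\lambda$-full. (In that example the lemma holds for the other reason, but it shows the second-case construction does not deliver what is claimed.) The earlier cell-shrinking argument also breaks down: the assertion that after at most two moves you reach a size-$\lambda$ cell $J$ with no small cofinal or coinitial subset is false --- for $\lambda=\omega_1$ and $X$ an uncountable set of reals, every subset has countable cofinality and coinitiality, so such a $J$ never appears --- and if such a $J$ did exist, its coinitiality would equal $\lambda$ by regularity, so $J$ would already contain a $\lambda\pm$ordered subset; the case in which you then try to extract the $\omega$-sequence is therefore incoherent.

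The missing idea is a different dichotomy, which is what the paper's proof uses. Either (A) there is some $Y\subseteq X$ with $|Y|=\lambda$ such that \emph{every} $y\in Y$ has $|(-\infty,y)_Y|<\lambda$ or $|(y,\infty)_Y|<\lambda$; then one of the two corresponding halves of $Y$ has size $\lambda$, and a recursion of length $\lambda$ (using regularity to see that $<\lambda$ many segments of size $<\lambda$ cannot exhaust it) produces a $\lambda\pm$ordered subset. Or (B) every $Y\subseteq X$ of size $\lambda$ contains a point $y$ with \emph{both} $|(-\infty,y)_Y|=\lambda$ and $|(y,\infty)_Y|=\lambda$; then pick $y_0$ with both sides large in $X$, set $Y_1=(y_0,\infty)_X$, pick $y_1\in Y_1$ with both sides large in $Y_1$ (so that $(y_0,y_1)_X$ already has size $\lambda$), and iterate for $\omega$ steps to get a strictly increasing $\lambda$-full sequence. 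The ``both sides large'' requirement is precisely what your greedy and cell-based choices never secure, and case (A) is precisely what happens when it cannot be secured.
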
  

\begin{proof}
Suppose that there is $Y \subseteq X$ such that $|Y| = \lambda $
and, for every  $y \in Y$, either 
$(-\infty, y)_Y$ or $(y, \infty)_Y$ 
has cardinality $<\lambda$.   Then an easy induction of length $\lambda$, using the regularity
of $\lambda$, shows that $Y$ (and hence $X$) has a $\lambda\pm$ordered subset.
Otherwise,
for every  $Y \subseteq X$ of cardinality $\lambda $,
there is $y \in Y$ such that both
$(-\infty, y)_Y$ and $(y, \infty)_Y$ 
have cardinality $\lambda$.
Then an induction of length $ \omega$ 
produces a strictly increasing  sequence $(y_n) _{n \in \omega } $
such that $\{ y_n \mid n \in \omega \}$ is
 $\lambda$-full in $X$.
\end{proof}
 
\begin{proposition} \labbel{pair}
Suppose that 
$\nu \leq \lambda$
are regular cardinals.
 Then the following conditions are equivalent.
 \begin{enumerate}
   \item 
Every linearly ordered set of cardinality $ \lambda$ 
has 
either a $ \lambda \pm$ordered subset, or 
a $\nu\pm$ordered $\lambda$-full subset. 
\item
For every GO space $X$, 
if $X$ has no (pseudo-)gap of type $\nu$ or $\lambda$,
then $X$ is $[ \lambda, \lambda ]$-compact. 
\item
For every LOTS $X$, 
if $X$ has no gap of type $\nu$ or $\lambda$,
then $X$ is $[ \lambda, \lambda ]$-compact.  
  \end{enumerate}

In Conditions (2) and (3) we can equivalently
restrict ourselves to those $X$'s such that $|X| \leq \lambda ^\nu$. 
 \end{proposition}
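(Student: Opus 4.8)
The plan is to prove the cycle $(1)\Rightarrow(2)\Rightarrow(3)\Rightarrow(1)$, where the implication $(2)\Rightarrow(3)$ is immediate since every LOTS is a GO space and every gap is in particular a (pseudo-)gap. For $(1)\Rightarrow(2)$, I would argue by contraposition through the characterization of $[\lambda,\lambda]$-compactness supplied by Theorem \ref{thm} (and already present in its proof, via $(2)\Leftrightarrow(3)$ and $(2)\Leftrightarrow(4)$): a GO space $X$ fails to be $[\lambda,\lambda]$-compact precisely when there is a strictly monotone $\lambda$-indexed sequence in $X$ with no supremum/infimum to which it converges — equivalently, a (pseudo-)gap of type $\lambda$. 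So suppose $X$ is a GO space with no (pseudo-)gap of type $\nu$ or $\lambda$, and take any subset $Z\subseteq X$ with $|Z|=\lambda$. Apply (1) to $Z$ with the induced linear order: either $Z$ has a $\lambda\pm$ordered subset, or a $\nu\pm$ordered $\lambda$-full subset. In the first case, a strictly monotone $\lambda$-sequence inside $X$ produces a (pseudo-)gap of type $\lambda$ by the remarks before Theorem \ref{thm}, a contradiction. In the second case, let $W\subseteq Z$ be $\nu\pm$ordered and $\lambda$-full in $X$; I claim the $\nu$-sequence enumerating $W$ cannot converge in $X$ to a point realizing it as a limit of type $\nu$, for if $W$ (say increasing of type $\nu$) had a supremum $x$, then $\lambda$-fullness forces $\bigcup_{y'\in W}(y,y')_X$ to have size $\geq\lambda$ for each $y\in W$, so the interval $(y,x)_X$ has size $\geq\lambda$; but a $\nu$-indexed increasing cofinal-in-$(-\infty,x)$ sequence in a GO space converging to $x$ forces $(-\infty,x)$ to have cofinality $\nu<\lambda$ and, more to the point, forces each tail $(y,x)_X$ to be covered by fewer than... — here I would instead reason directly that the gap cut out by $W$ has a type, and $\lambda$-fullness makes that type $\geq\lambda$ on one side while $W$ witnesses type $\nu$ on the other, so $(A,B)$ is a (pseudo-)gap with one of its (at most two) types equal to $\lambda$, again contradicting the hypothesis on $X$. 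Either way $X$ is $[\lambda,\lambda]$-compact.

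For $(3)\Rightarrow(1)$, the plan is again contrapositive: suppose (1) fails, so there is a linearly ordered set $Y$ with $|Y|=\lambda$ having neither a $\lambda\pm$ordered subset nor a $\nu\pm$ordered $\lambda$-full subset. I would build from $Y$ a LOTS $X$ — essentially $Y$ itself (or a Dedekind-style completion/rearrangement of it) — that has no gap of type $\nu$ or $\lambda$ yet is not $[\lambda,\lambda]$-compact. The failure of $[\lambda,\lambda]$-compactness is cheap: $|Y|=\lambda$ and, since $Y$ has no $\lambda\pm$ordered subset, $Y$ carries lots of ``short'' cuts; one produces a non-converging monotone $\lambda$-sequence or simply invokes that a LOTS of size $\lambda$ with no $\lambda$-sized well-ordered or reverse-well-ordered chain still has size $\lambda$ and hence a set of size $\lambda$ with no complete accumulation point of the right kind — the cleanest route is to exhibit the bad gap. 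The delicate point is arranging that $X$ has no gap of type exactly $\nu$ or exactly $\lambda$; Lemma \ref{omega} is the tool here, telling us that any $\lambda$-sized piece either is $\lambda\pm$ordered or contains a $\lambda$-full $\omega$-sequence, which by hypothesis on $Y$ must then fail to be $\nu\pm$ordered. I would use this to control the cofinalities appearing in $X$, filling in or collapsing cuts so that the only types of gaps that survive are the ``harmless'' ones (not in $\{\nu,\lambda\}$), while preserving enough structure to keep $X$ non-$[\lambda,\lambda]$-compact. This construction step is the main obstacle: it is where the combinatorial content of condition (1) must be converted into an honest order-theoretic object, and the bookkeeping on which cofinalities of cuts are permitted is the part requiring genuine care.

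Finally, for the last sentence — that in (2) and (3) one may restrict to $X$ with $|X|\leq\lambda^{\nu}$ — I would observe that in the proof of $(1)\Rightarrow(2)$ only a subset $Z$ of size $\lambda$ of $X$ was ever used, together with its closure; and in the GO/LOTS setting the closure of a $\lambda$-sized subset, once one has no $\lambda\pm$ordered subsets and no bad gaps, embeds into something of size at most $\lambda^{\nu}$ (each point of the closure is a limit of a monotone sequence of type some regular $\mu\leq\lambda$ from $Z$, and by the structure theory the relevant $\mu$ can be taken $\leq\nu$, so there are at most $\lambda^{\nu}$ such limits). Conversely the restricted versions of (2) and (3) trivially still imply (1) via the same contrapositive construction, since the witnessing $X$ built from $Y$ has size at most $\lambda^{\nu}$ by the same counting. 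Thus the restricted statements are equivalent to the unrestricted ones, and the proposition follows.
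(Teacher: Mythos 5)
Your argument for (1) $\Rightarrow$ (2) rests on a false premise: you assert that a GO space fails to be $[\lambda,\lambda]$-compact precisely when it has a (pseudo-)gap of type $\lambda$. This is exactly the misconception the paper refutes at the start of Section \ref{setth}: $\mathbb{R}\times\mathbb{Z}$ with the lexicographic order is a LOTS whose only gaps have type $\omega$, yet it is discrete of cardinality $2^{\omega}$ and hence not $[\lambda,\lambda]$-compact for $\omega<\lambda\le 2^{\omega}$. (If your characterization were true, condition (1) and the cardinal $\nu$ would be irrelevant, and (**)$_{\{\lambda\}}$ would hold for every regular $\lambda$, contradicting Corollary \ref{proph}(1).) The misconception derails your case analysis: after applying (1) to a subset $Z$ of cardinality $\lambda$, you should not be deriving ``contradictions'' in both cases --- that would show no such $Z$ exists, which is absurd --- but rather producing a complete accumulation point of $Z$ in each case, which is what $[\lambda,\lambda]$-compactness (equivalently $\CAP_\lambda$, since $\lambda$ is regular) demands. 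The correct use of the hypotheses is: a $\lambda\pm$ordered subset of $Z$, or a $\nu\pm$ordered $\lambda$-full subset $W$ of $Z$, must \emph{converge} to a supremum (or infimum) $y$ precisely because $X$ has no (pseudo-)gap of type $\lambda$, resp.\ $\nu$; then every neighborhood of $y$ contains a tail of that subset, hence (by cardinality in the first case, by $\lambda$-fullness of $W$ in $Z$ in the second) $\lambda$-many points of $Z$, so $y$ is a complete accumulation point. Your alternative claim that $\lambda$-fullness forces the cut determined by $W$ to have a side of type $\lambda$ is also unjustified: $\lambda$-fullness bounds the \emph{cardinality} of a union of intervals, not the cofinality or coinitiality of the cut.

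The same conflation undermines your plan for (3) $\Rightarrow$ (1): you say the ``cleanest route is to exhibit the bad gap,'' but the space to be constructed must by design have \emph{no} gap of type $\nu$ or $\lambda$; its failure of $[\lambda,\lambda]$-compactness has to be witnessed instead by the copy of $L$ sitting inside it having no complete accumulation point. You acknowledge that the construction is ``the main obstacle'' and do not carry it out; in the paper this is the substantive step: fill each gap of $L$ of type $\nu$ with a single point or with a copy of $\omega$ or of $\omega$ reversed according to the type of the other side, and replace each point of $L$ that would become a complete accumulation point by a copy of $\omega$, $\mathbb{Z}$ or $\omega$ reversed, realizing $X$ inside $L^{+}\times\mathbb{Z}$ with the lexicographic order; one then checks that no gap of type $\nu$ or $\lambda$ is created and that no point of $X$ is a complete accumulation point of $L$, the latter using that $L$ has no $\nu\pm$ordered $\lambda$-full subset. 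As it stands, the proposal establishes neither nontrivial implication.
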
  

\begin{proof} (Sketch)
Recall that
if $\lambda$ is regular, then 
$[ \lambda, \lambda ]$-compactness is equivalent
to
$\CAP_ \lambda $. We shall use this equivalent condition throughout the proof.

(1) $\Rightarrow $  (2) 
Let
$X$ be a GO space,
$Z\subseteq X$, and $|Z|= \lambda $.  
Apply (1) to $Z$, say, $Z$  has  subset 
$Y$ of order type $\nu$ and $\lambda$-full in $Z$. 
Since $X$ has no gap of type $\nu$,
there is in $X$ a supremum $y$ of $Y$, and $y$ belongs to the closure
of $Y$. Then every neighborhood of $y$ contains $\lambda$-many elements from
$Z$, since $Y$ is $\lambda$-full in $Z$; in particular,  $\lambda$-many elements from
$X$, since $X \supseteq Z$.

The other cases are treated in a similar way.

(2) $\Rightarrow $  (3) is trivial.

(3) $\Rightarrow $  (1) We shall sketch a proof of the contrapositive. 
So, suppose that (1) fails, thus there exists a linear order $L$ 
of cardinality $\lambda$ without $ \lambda \pm$ordered subsets and 
without $\nu\pm$ordered $\lambda$-full subsets. 
By Lemma \ref{omega}, necessarily $\nu> \omega $. 
Extend $L$ to another linearly ordered set $X$ in the following way.
Fill each gap having both left and right type $\nu$ by  adding an element
in the middle of the gap. If the gap has left type $\nu$ but a different right type,
``fill'' the gap by adding a copy of $ \omega$. In the symmetric situation,
add a copy of $ \omega \breve \ $. This procedure destroys all gaps of
type $\nu$.   
We also want to ``forbid'' the existence (in $X$) of complete accumulation 
points of $L$. A candidate for such an accumulation point is some 
$ \ell \in L$ such that, say, 
$(\ell, \infty)_L$  is $\lambda$-full in $L$ on the left.
Then replace $\ell$ with a copy of 
$ \omega$ or, according to the situation,
with a copy of 
$ \mathbb Z $. In the symmetric situation,
it might be necessary to replace $\ell$ with a copy of $ \omega \breve \ $,
instead.

Formally, $X$ can be considered as a subset
of  $L^+ \times \mathbb Z$ with the lexicographical order,
where $L^+$ is the completion of $L$.
It can be checked that the above constructions introduce no new gap of type
$\nu$ or $\lambda$ ($L$ has no gap of type $\lambda$ from the beginning),
and that no point of $X$ is a complete accumulation point of $L$ (here we need the assumption
that L has no $\nu\pm$ordered $\lambda$-full subset
otherwise the above ``fillings'' of gaps could introduce
some complete accumulation point). Since $|L| = \lambda $, then (3) fails. 

In order to prove the last statement it is enough to check that
the $X$ constructed in the last part of the proof has cardinality
$ \leq \lambda ^ \nu$. 
\end{proof}

\begin{corollary} \labbel{proph}
If $H$ is a set of regular cardinals,
then the following holds.
  \begin{enumerate}    
\item 
If $H= \{ \lambda   \} $ 
is a singleton, then (**)$ _{H} $ holds if and only if 
$\lambda$ is either $ \omega$, or a weakly compact cardinal.
\item
If  (**)$ _{H} $ holds, then $\inf H$ 
is either $ \omega$ or a weakly compact cardinal.
\item
If
$\inf H= \omega $, then  (**)$ _{H} $ holds.
  \end{enumerate} 
 \end{corollary}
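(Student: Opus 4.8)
The plan is to reduce all three parts to Proposition~\ref{pair}, Lemma~\ref{omega}, and the classical characterization of weak compactness: for a regular cardinal $\lambda$, every linearly ordered set of cardinality $\lambda$ has a $\lambda\pm$ordered subset if and only if $\lambda=\omega$ or $\lambda$ is weakly compact (this is the reformulation for linear orders of the partition relation $\lambda\to(\lambda)^2_2$). In each part, one half of (**)$_H$ is for free: if a GO space $X$ is $[\nu,\nu]$-compact for every $\nu\in H$, then it has no (pseudo-)gap of type in $H$, since a (pseudo-)gap of type $\nu$ would give a strictly monotone non-converging $\nu$-indexed sequence, hence (as in the proof of \ref{thm}\,(2)$\Rightarrow$(3)) an open cover of size $\nu$ with no subcover of size $<\nu$, contradicting $[\nu,\nu]$-compactness. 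So only the reverse implication needs proof below.

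For part~(3), assume $\inf H=\omega$, so that $\omega=\min H\in H$. Fix $\nu\in H$ and a GO space $X$ with no (pseudo-)gap of type in $H$; in particular $X$ has no (pseudo-)gap of type $\omega$ and none of type $\nu$. Apply Proposition~\ref{pair}, taking for its pair $\nu\le\lambda$ of regular cardinals the pair $(\omega,\nu)$. Condition~(1) of Proposition~\ref{pair} then states that every linear order of cardinality $\nu$ has a $\nu\pm$ordered subset or an $\omega\pm$ordered $\nu$-full subset, which is exactly Lemma~\ref{omega}; hence condition~(2) of Proposition~\ref{pair} holds for $(\omega,\nu)$, and applied to $X$ it gives that $X$ is $[\nu,\nu]$-compact. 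Since $\nu\in H$ was arbitrary, $X$ is $[H]$-compact, so (**)$_H$ holds. The same mechanism gives the nontrivial half of part~(1): if $\lambda>\omega$ is weakly compact, then every linear order of cardinality $\lambda$ has a $\lambda\pm$ordered subset, so condition~(1) of Proposition~\ref{pair} holds for the pair $(\lambda,\lambda)$ (its first disjunct already applies), whence condition~(2) for that pair holds, i.e.\ every GO space without a (pseudo-)gap of type $\lambda$ is $[\lambda,\lambda]$-compact; with the free half this is (**)$_{\{\lambda\}}$. For $\lambda=\omega$, (**)$_{\{\omega\}}$ is the case $H=\{\omega\}$ of part~(3) (equivalently, Corollary~\ref{initial}).

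For part~(2), set $\lambda=\inf H=\min H$ and assume $\lambda\ne\omega$ together with (**)$_H$; by the classical characterization it suffices to show that every linear order of cardinality $\lambda$ has a $\lambda\pm$ordered subset. Suppose not: let $L$ be a linear order of cardinality $\lambda$ with no $\lambda\pm$ordered subset, and give $L$ the discrete topology. This is a $T_2$ space with a base of convex (singleton) open sets, hence a GO space. For any (pseudo-)gap $(A,B)$ of this space, condition~(*) forces $A$ to be an initial segment of $L$ with $B=L\setminus A$, and any type of $(A,B)$ is the cofinality of $A$ or the coinitiality of $B$; such a cofinality or coinitiality is $\le|L|=\lambda$ and cannot equal $\lambda$, as a strictly increasing cofinal $\lambda$-sequence in $A$, or a strictly decreasing coinitial $\lambda$-sequence in $B$, would be a $\lambda\pm$ordered subset of $L$. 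So every type is $<\lambda=\min H$, whence the discrete space on $L$ has no (pseudo-)gap of type in $H$. On the other hand $L$, as a subset of this discrete space, has no accumulation point whatsoever, so (being of cardinality $\lambda\in H$) the space fails $\CAP_\lambda$ and is not $[\lambda,\lambda]$-compact, contradicting (**)$_H$. Hence $\lambda$ is weakly compact.

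This finishes part~(2), and with it the remaining half of part~(1): for $\lambda>\omega$, the implication (**)$_{\{\lambda\}}\Rightarrow\lambda$ weakly compact is just part~(2) applied to $H=\{\lambda\}$. The step I expect to require the most care is the computation in part~(2) of the types of the (pseudo-)gaps of a discretely topologized linear order and the verification that they all lie below $\min H$, together with invoking the weak-compactness characterization in precisely the form about $\lambda\pm$ordered subsets; the rest is routine, once one notices that Lemma~\ref{omega} is exactly instance $(\omega,\nu)$ of condition~(1) in Proposition~\ref{pair} and that weak compactness makes the first disjunct of that condition hold automatically.
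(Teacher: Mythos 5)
Your proposal is correct and follows essentially the same route as the paper: part (3) and the positive half of part (1) via Lemma \ref{omega} and Proposition \ref{pair}, and part (2) via a discrete counterexample of cardinality $\lambda=\min H$ built from a linear order with no $\lambda\pm$ordered subset, invoking the standard characterization of weak compactness. The only (harmless) deviations are that you use $L$ with the discrete topology as a GO space where the paper uses the LOTS $L\times\mathbb Z$, and that you obtain the converse direction of part (1) from part (2) rather than directly from Proposition \ref{pair} (2) $\Rightarrow$ (1).
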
 

\begin{proof}
(1)
 is the particular case $\nu= \lambda $
of Proposition \ref{pair} (2) $\Leftrightarrow $  (1), since 
if $\lambda$ is regular, then every   $ \lambda \pm$ordered subset
is necessarily $\lambda$-full. Recall
that, by a well-known characterization, an uncountable cardinal $\lambda$ is
 weakly  compact if and only if every linear order of cardinality
$\lambda$ has a $\lambda\pm$ordered subset. 
Notice that $ \omega$, too,
satisfies the above property.

(2)
Let $\lambda= \inf H$.
If $\lambda = \omega $, we are done.
Otherwise, suppose by contradiction that $\lambda$
is not weakly compact, thus there is
 linear order $L$ without a $\lambda\pm$ordered subset.
Consider the LOTS $X=L \times \mathbb Z$, with the lexicographic order.
Then $X$ has no gap of type $\lambda$, since $\lambda> \omega $ ; moreover, it
has no gap of type $\nu \in H \setminus \{ \lambda  \} $,
since $|X| = \lambda = \inf H< \nu$.
By (**)$ _{H} $, $X$ is $[ \lambda, \lambda ]$-compact,
a contradiction, since $X$ has 
cardinality $\lambda$ and the discrete topology.

(3) 
Suppose that a GO space $X$
has no (pseudo-)gap of type in $H$.
By Lemma \ref{omega}, for every  
$\lambda \in H$, 
condition (1) in Proposition  \ref{pair} is satisfied, when $\nu= \omega $.   
By Proposition \ref{pair}  (1) $\Rightarrow $  (2),
 $X$ is $[ \lambda, \lambda ]$-compact.
This happens
for every $\lambda \in H$, hence
$X$ is $[H]$-compact.
\end{proof}

\section*{Disclaimer} \labbel{disc}

Though the author has done his best efforts to compile the following
list of references in the most accurate way,
 he acknowledges that the list might 
turn out to be incomplete
or partially inaccurate, possibly for reasons not depending on him.
It is not intended that each work in the list
has given equally significant contributions to the discipline.
Henceforth the author disagrees with the use of the list
(even in aggregate forms in combination with similar lists)
in order to determine rankings or other indicators of, e.~g., journals, individuals or
institutions. In particular, the author 
 considers that it is highly  inappropriate, 
and strongly discourages, the use 
(even in partial, preliminary or auxiliary forms)
of indicators extracted from the list in decisions about individuals (especially, job opportunities, career progressions etc.), attributions of funds, and selections or evaluations of research projects.

\bibliographystyle {amsalpha}

\def\cprime{$'$} \def\cprime{$'$} \def\cprime{$'$}
\providecommand{\bysame}{\leavevmode\hbox to3em{\hrulefill}\thinspace}
\providecommand{\MR}{\relax\ifhmode\unskip\space\fi MR }
\providecommand{\MRhref}[2]{%
  \href{http://www.ams.org/mathscinet-getitem?mr=#1}{#2}
}
\providecommand{\href}[2]{#2}

\end{document}